\theoremstyle{plain}
\newtheorem{theorem}{Theorem}
\newtheorem{lemma}{Lemma}
\newtheorem{corollary}{Corollary}
\theoremstyle{definition}
\newtheorem{definition}{Definition}
\newtheorem{example}{Example}
\newcommand{\I}{\text{I}}
\newcommand{\II}{\text{II}}
\newcommand{\III}{\text{III}}
\newcommand{\IV}{\text{IV}}
\newcommand{\kmin}{k^\text{min}}
\newcommand{\Bcal}{\mathcal{B}}
\newcommand{\suchthat}{\; : \;}
\newcommand{\CommentLine}[1]{\State $\triangleright$ #1}
\title{Planar additive bases for rectangles}
\author{Jukka Kohonen}
\address{Department of Computer Science, University of Helsinki, P.O. Box 68, FI-00014 University of Helsinki, Finland}
\email{jukka.kohonen@helsinki.fi}
\author{Visa Koivunen}
\address{Department of Signal Processing and Acoustics, Aalto University, P.O. Box 15400, FI-00076 Aalto, Finland}
\email{visa.koivunen@aalto.fi}
\author{Robin Rajam\"aki}
\address{Department of Signal Processing and Acoustics, Aalto University, P.O. Box 15400, FI-00076 Aalto, Finland}
\email{robin.rajamaki@aalto.fi}
\subjclass[2010]{Primary 11B13}
\begin{document}
\begin{abstract}
  We study a generalization of additive bases into a planar setting.
  A planar additive basis is a set of non-negative integer pairs whose
  vector sumset covers a given rectangle.  Such bases find
  applications in active sensor arrays used in, for example, radar and
  medical imaging.  The problem of minimizing the basis cardinality
  has not been addressed before.

  We propose two algorithms for finding the minimal bases of small
  rectangles: one in the setting where the basis elements can be
  anywhere in the rectangle, and another in the restricted setting,
  where the elements are confined to the lower left quadrant.  We
  present numerical results from such searches, including the minimal
  cardinalities for all rectangles up to $[0,11]\times[0,11]$, and up
  to $[0,46]\times[0,46]$ in the restricted setting.  We also prove
  asymptotic upper and lower bounds on the minimal basis cardinality
  for large rectangles.
\end{abstract}
\maketitle

%%%%%%%%%%%%%%%%%%%%%%%%%%%%%%%%%%%%%%%%%%%%%%%%%%%%%%%%%%%%%%%%%%%%%%
\section{Introduction}
\label{sec:intro}
An additive basis for an interval of integers $[0, n] =
\{0,1,2,\ldots,n\}$ is a set of non-negative integers $A$ such that
$A+A \supseteq [0,n]$.  By extension we define that a \emph{planar
  additive basis} for a rectangle of integers $R=[0,s_x]\times[0,s_y]$
is a set of points with non-negative integer coordinates
\[
A = \{(x_1,y_1),(x_2,y_2),\ldots,(x_k,y_k)\}, \quad\text{such that $A+A \supseteq R$}.
\]
The sumset is defined in terms of vector addition, that is
\[
A+A' = \{(x+x', \; y+y') \suchthat (x,y) \in A, \; (x',y') \in A' \}.
\]

Additive bases for integer intervals have been widely studied since
Rohrbach~\cite{rohrbach1937}.  Often one seeks to maximize~$n$ when
the basis cardinality $|A|=k$ is given.  For small~$k$ this has been
approached with computations
\cite{challis1993,kohonen2014,lunnon1969,riddell1978}, and for
large~$k$ with asymptotic bounds~\cite{kohonen2017,yu2015}.

Less is known about planar additive bases.  Kozick and Kassam
discussed them in an application context, and proposed some simple
designs~\cite{kozick1991}.  In a rather different line of work,
sumsets in vector spaces and abelian groups have been studied with the
interest in how \emph{small} the sumset can be
\cite{eliahou1998,eliahou2005,eliahou2003}.  Boundary effects in
planar sumsets have also been studied~\cite{han2004}.
 
We now aim to minimize the cardinality $k$ of a planar additive basis,
when the target rectangle $R=[0,s_x]\times[0,s_y]$ is given.  To the
best of our knowledge, this combinatorial optimization problem has not
been addressed before.

Planar bases have an application in signal processing, when an array
of sensor elements is deployed on a plane to be used in active
imaging~\cite{naidu2017}.  Here ``active'' means that the sensors both
transmit a signal towards objects such as radar targets or human
tissue, and receive the reflections.  The pairwise vector sums of the
sensor locations make up a virtual sensor array, called the sum
co-array, which may be used to improve imaging resolution
\cite{hoctor1990}.

An important special case is that of \emph{restricted} bases.  A basis
$A$ for $[0,n]$ is restricted if $A \subseteq [0, n/2]$.  Analogously
we define that a basis $A$ for $[0,s_x]\times[0,s_y]$ is restricted if
$A \subseteq [0,s_x/2]\times[0,s_y/2]$.  Apart from practical
motivations related to the physical placing of sensors, with our
algorithms one can minimize $k$ among restricted bases much faster
than among all bases, so larger instances can be solved.  Also,
restricted bases often exhibit interesting structure.

We introduce here the following results.  (1)~A~search algorithm for
finding all bases of a given size for a given rectangle; and the
minimum basis sizes for all rectangles with $s_x,s_y \le 11$.
(2)~A~meet-in-the-middle method that constructs a restricted planar
basis by gluing together four smaller bases, one in each corner; and
the minimum restricted basis sizes for all even $s_x,s_y \le 46$.
(3)~Asymptotic bounds on the minimum basis size for large rectangles.

%%%%%%%%%%%%%%%%%%%%%%%%%%%%%%%%%%%%%%%%%%%%%%%%%%%%%%%%%%%%%%%%%%%%%%
\section{Definitions and preliminary observations}
\label{sec:preliminary}

The target rectangle is $R=[0,s_x]\times[0,s_y]$.  If $R$ is square,
we call it the \emph{$s$-square}, with $s=s_x=s_y$.  A basis
containing $k$ elements is a \emph{$k$-basis}.  The size of the
smallest basis for $[0,s_x]\times[0,s_y]$ is denoted by $k(s_x,s_y)$.

If $s_x$ and $s_y$ are even, we set $h_x = s_x/2$ and $h_y = s_y/2$.
Then a basis $A$ is \emph{restricted} if $A \subseteq [0,h_x] \times
[0,h_y]$. Note that it follows that $A+A=R$. The size of the smallest
restricted basis is $k^*(s_x,s_y)$.

\begin{figure}[b]
  \begin{subfigure}[b]{0.25\textwidth}
    \centerline{\includegraphics[width=1\textwidth]{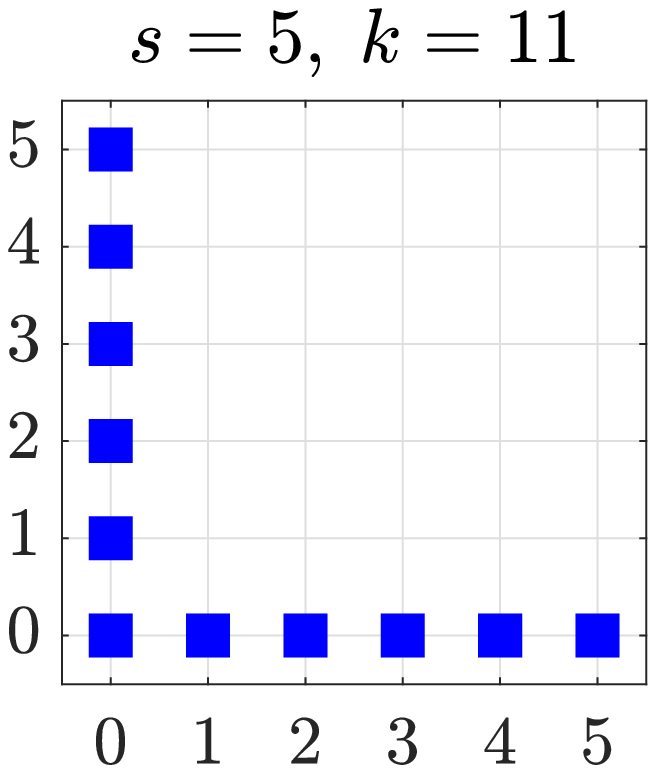}}
    \caption{}
    \label{fig:l_5}
  \end{subfigure}
  \hfill
  \begin{subfigure}[b]{0.25\textwidth}
    \centerline{\includegraphics[width=1\textwidth]{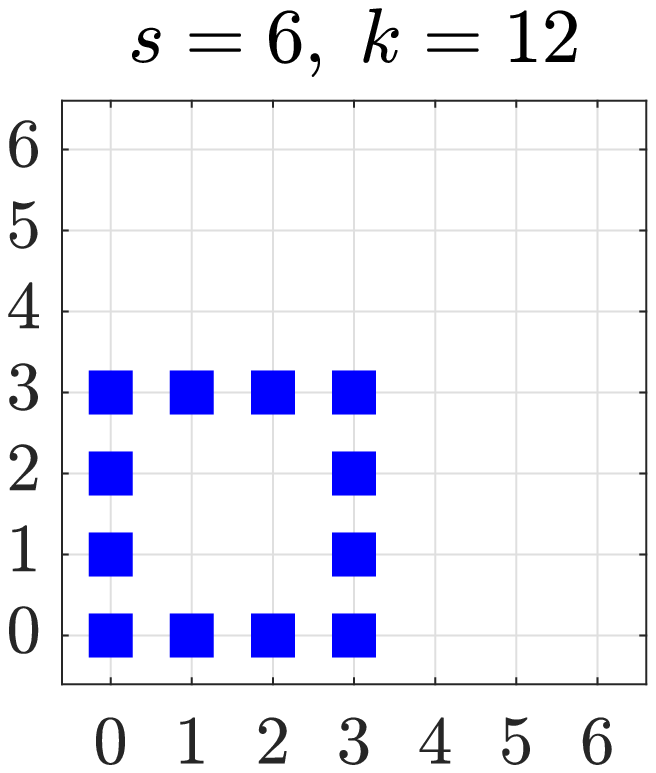}}
    \caption{}
    \label{fig:b_6}
  \end{subfigure}
  \hfill
  \begin{subfigure}[b]{0.25\textwidth}
    \centerline{\includegraphics[width=1\textwidth]{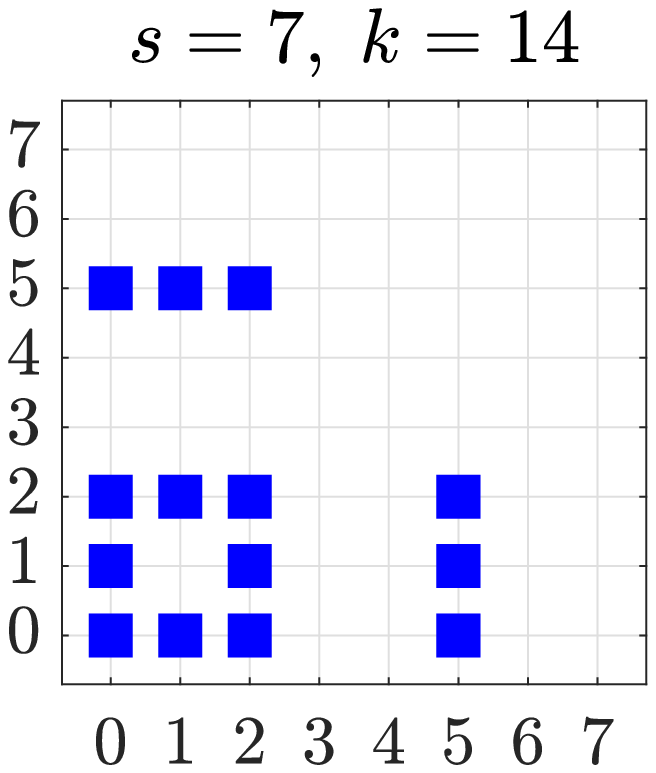}}
    \caption{}
    \label{fig:nr_7}
  \end{subfigure}
  \caption{(a) L-shaped basis for the 5-square. (b) Boundary basis for
    the 6-square. (c) One of the minimal bases for the 7-square.}
  \label{fig:intro}
\end{figure}

Two simple basis constructions were proposed by Kozick and Kassam in
the context of sensor arrays~\cite{kozick1991}.  For any rectangle,
the \emph{L-shaped basis} is
\begin{equation}
  ([0,s_x] \times \{0\}) \;\cup\; (\{0\} \times [0,s_y]),
  \label{eq:lbasis}
\end{equation}
which has $s_x+s_y+1$ elements.  If $s_x,s_y\ge 2$ are even, the
\emph{boundary basis} is
\begin{equation}
  ([0,h_x] \!\times\! \{0,h_y\})
  \; \cup \; (\{0,h_x\} \!\times\! [0, h_y]),
  % ([0,h_x] \!\times\! \{0\})
  % \; \cup \; ([0,h_x] \!\times\! \{h_y\})
  % \; \cup \; (\{0\} \!\times\! [0, h_y])
  % \; \cup \; (\{h_x\} \!\times\! [0, h_y]),
  \label{eq:boundarybasis}
\end{equation}
which has $s_x+s_y$ elements and is restricted.  These two provide a
minimal basis for most small squares (boundary basis if $s\ge 2$ is
even, L-shaped otherwise).  The smallest counterexample is the
7-square, whose minimal bases have only $14$~elements, one less than
the L-shaped basis (see Figure~\ref{fig:nr_7}).  However, for
non-square rectangles, \eqref{eq:lbasis} and \eqref{eq:boundarybasis}
are generally not minimal.  Examples of this will be presented in
Section~\ref{sec:results}, and an asymptotic result in
Section~\ref{sec:largescale}.

If $A$ is a basis for~$R$ such that $A \subseteq R$, we say that $A$
is \emph{admissible}.  If not, then it cannot be minimal, since one
can simply drop the elements that are outside the target.  So we
confine our attention to admissible bases.

The following observations about the corners and the horizontal edges
of planar additive bases will be useful.  Corresponding results in the
vertical direction can be proven by transposing $x$ and~$y$.

\begin{lemma}[Origin corner]
  \label{lemma:corners_nonrestr}
  If $A$ is a basis for a rectangle with $s_x \geq 1$, then $(0,0),
  (1,0) \in A$.
\end{lemma}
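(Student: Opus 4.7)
The plan is to argue that both points must appear in $A$ simply because they lie in the target rectangle $R$ and therefore must be representable as vector sums of two basis elements, and then to use non-negativity of coordinates to force the decomposition.

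First I would handle $(0,0)$. Since $(0,0) \in R$ and $A+A \supseteq R$, there exist $(x_1,y_1),(x_2,y_2) \in A$ with $x_1+x_2 = 0$ and $y_1+y_2 = 0$. Because all coordinates of points in $A$ are non-negative integers, the only way a sum of two such integers can equal $0$ is if both summands are $0$. Hence $(x_1,y_1) = (x_2,y_2) = (0,0)$, which shows $(0,0) \in A$.

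Next I would handle $(1,0)$, which lies in $R$ by the assumption $s_x \geq 1$. By the same reasoning there exist $(x_1,y_1),(x_2,y_2) \in A$ with $x_1+x_2 = 1$ and $y_1+y_2 = 0$. Non-negativity forces $y_1 = y_2 = 0$, and the unique decomposition of $1$ into non-negative integers (up to order) is $1 = 0+1$. Hence one of the two summands is exactly $(1,0)$, so $(1,0) \in A$.

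There is no real obstacle here; the statement is essentially an unpacking of the definition combined with the observation that non-negative integer decompositions of $0$ and $1$ are trivial. The only thing to be careful about is that the argument for $(1,0)$ genuinely needs the hypothesis $s_x \geq 1$ to ensure $(1,0) \in R$, and that the transposed statement (which gives $(0,1) \in A$ whenever $s_y \geq 1$) follows from the symmetry noted just before the lemma.
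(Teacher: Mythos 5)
Your proposal is correct and follows essentially the same argument as the paper: the paper simply observes in one step that the unique non-negative decomposition $(1,0)=(0,0)+(1,0)$ forces both elements into $A$, whereas you split this into two cases, but the underlying idea (non-negativity of coordinates pins down the decomposition) is identical.
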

\begin{proof}
  The only way to represent $(1,0)$ as a sum of two pairs of
  non-negative integers is $(0,0)+(1,0)$, so those elements must be in
  the basis.
\end{proof}

\begin{lemma}[Restricted edges]
  \label{lemma:edges_restr}
  If $A$ is a restricted basis for $[0,s_x]\times[0,s_y]$, then its
  bottom edge $\{x : (x,0)\in A\}$ and top edge $\{x : (x,h_y)\in A\}$
  are (one-dimensional) restricted bases for $[0,s_x]$.
\end{lemma}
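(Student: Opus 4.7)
The plan is to exploit the restriction $A \subseteq [0,h_x]\times[0,h_y]$ to force the $y$-coordinates in any decomposition of a point on the bottom or top edge of $R$, which immediately reduces everything to the $x$-axis.

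For the bottom edge $B_0 = \{x : (x,0)\in A\}$, I would pick any $n \in [0,s_x]$. Since $A+A \supseteq R$, the point $(n,0)$ has some representation $(x_1,y_1)+(x_2,y_2)$ with both summands in $A$. The $y$-coordinates are non-negative integers summing to $0$, so $y_1=y_2=0$; hence $x_1,x_2 \in B_0$ and $x_1+x_2=n$. This gives $B_0+B_0 \supseteq [0,s_x]$. Combined with $B_0 \subseteq [0,h_x]$ (inherited from restrictedness of $A$), this is exactly the statement that $B_0$ is a one-dimensional restricted basis for $[0,s_x]$.

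For the top edge $B_h = \{x : (x,h_y)\in A\}$, the same strategy works, with the key observation being at the top rather than at the bottom: a point $(n, s_y)$ in $R$ has a representation $(x_1,y_1)+(x_2,y_2)$ with $y_1+y_2=s_y=2h_y$, but since $A$ is restricted each $y_i$ is at most $h_y$, forcing $y_1=y_2=h_y$. Therefore $x_1,x_2 \in B_h$ and $x_1+x_2=n$, so $B_h+B_h \supseteq [0,s_x]$, while $B_h \subseteq [0,h_x]$ as before.

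There is no real obstacle; the proof is essentially a two-line case analysis driven by the restriction hypothesis, and the only thing to be mildly careful about is explicitly invoking the upper bound $h_y$ on $y$-coordinates when handling the top edge (which is the step that uses restrictedness in an essential way, as opposed to the bottom edge where non-negativity alone suffices).
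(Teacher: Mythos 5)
Your proof is correct and follows essentially the same argument as the paper: non-negativity of the $y$-coordinates forces both summands of $(n,0)$ onto the bottom edge, the upper bound $h_y$ forces both summands of $(n,s_y)$ onto the top edge, and restrictedness of $A$ gives the bound $x',x'' \le h_x$ in each case.
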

\begin{proof}
  Consider first the bottom edge.  Since the $y$ coordinates in $A$
  are non-negative, for any $x\in[0,s_x]$ the point $(x,0)$ must be
  the sum of some $(x',0),(x'',0) \in A$.  Since $A$ is restricted, we
  have $x',x'' \le h_x$.

  Consider next the top edge.  Since the $y$ coordinates in $A$ are at
  most $h_y$, for any $x\in[0,s_x]$ the point $(x,s_y)$ must be the
  sum of some $(x',h_y),(x'',h_y) \in A$.  Since $A$ is restricted, we
  have $x',x'' \le h_x$.
\end{proof}

\begin{lemma}[Two rows]
  \label{lemma:shallow_restr}
  For any even $s_x \ge 0$, we have $k^*(s_x,2) = 2k^*(s_x,0)$.
\end{lemma}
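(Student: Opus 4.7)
The plan is to show matching upper and lower bounds. Since $s_y = 2$, we have $h_y = 1$, so a restricted basis must lie in $[0,h_x]\times\{0,1\}$; only two rows are available. This makes both directions essentially one-dimensional.

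For the upper bound, I would take any one-dimensional restricted basis $B \subseteq [0,h_x]$ for $[0,s_x]$ of minimum size $k^*(s_x,0)$ and lift it to $A = B \times \{0,1\}$. Then $A+A = (B+B) \times \{0,1,2\} \supseteq [0,s_x] \times [0,2]$, $A$ is restricted, and $|A| = 2k^*(s_x,0)$. Hence $k^*(s_x,2) \le 2k^*(s_x,0)$.

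For the lower bound, let $A$ be any minimum restricted basis for $[0,s_x]\times[0,2]$. Because $h_y=1$, every element of $A$ has $y$-coordinate either $0$ or $1$, so $|A|$ equals the size of the bottom edge plus the size of the top edge (in the notation of Lemma~\ref{lemma:edges_restr}). By Lemma~\ref{lemma:edges_restr}, each of these two edges is a one-dimensional restricted basis for $[0,s_x]$, so each contains at least $k^*(s_x,0)$ elements. Therefore $|A| \ge 2k^*(s_x,0)$, giving the matching lower bound.

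The argument is quite short and the only step requiring care is the appeal to Lemma~\ref{lemma:edges_restr}: one must observe that in the restricted setting with $h_y = 1$ the partition of $A$ into ``bottom'' and ``top'' rows exhausts $A$, which is what converts the two edge bounds into a bound on $|A|$. I do not expect a serious obstacle; the boundary case $s_x = 0$, where $k^*(0,0)=1$ and $A=\{(0,0),(0,1)\}$ achieves the value $2$, is consistent with the formula.
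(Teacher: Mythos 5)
Your proposal is correct and matches the paper's proof essentially step for step: the upper bound via lifting a one-dimensional restricted basis to $B\times\{0,1\}$, and the lower bound via Lemma~\ref{lemma:edges_restr} applied to the bottom and top edges. Your explicit remark that $h_y=1$ forces the two rows to partition $A$ is a point the paper leaves implicit, but it is the same argument.
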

\begin{proof}
  Let $A$ be a restricted basis for $[0,s_x]\times[0,2]$.  By
  Lemma~\ref{lemma:edges_restr} its bottom and top edges are
  restricted bases for $[0,s_x]$, so each has at least $k^*(s_x,0)$
  elements.  Thus $|A| \ge 2k^*(s_x,0)$.

  To see that $k^*(s_x,2) \le 2k^*(s_x,0)$, let $A^*$ be a restricted
  basis for $[0,s_x]$.  Then $A^* \times [0,1]$ is a restricted
  basis for $[0,s_x]\times[0,2]$.
\end{proof}

%%%%%%%%%%%%%%%%%%%%%%%%%%%%%%%%%%%%%%%%%%%%%%%%%%%%%%%%%%%%%%%%%%%%%%
\section{Search algorithm for admissible bases}
\label{sec:nonrestricted}
Here we develop a method to find all admissible $k$-bases for a given
rectangle.  Then we can also establish the minimum value of~$k$.  For
example, the L-shaped basis suffices to show that $k(9,9) \le 19$, but
to prove that $k(9,9)=19$ we must ascertain that there is no 18-basis
for the $9$-square.  Trying out the $\binom{100}{18} \approx 3 \cdot
10^{19}$ ways of placing 18 elements in $[0,9]\times[0,9]$ is
obviously impractical.

Our Algorithm~\ref{alg:search} is a relatively straightforward
generalization of Challis's algorithm, which finds one-dimensional
bases \cite{challis1993}.  Assume for simplicity that $s_x \ge 2$.
By~Lemma~\ref{lemma:corners_nonrestr} the points $(0,0)$ and $(1,0)$
must be included in the basis.  Next we branch on the decision whether
$(2,0)$ is included.  We proceed to the right and rowwise, branching
at each location on whether that point is included, until we have $k$
elements or reach the top right corner.

\begin{algorithm}[tb]
  \caption{Find all admissible $k$-bases for $[0,s_x]\times[0,s_y]$}
  \label{alg:search}
  \begin{algorithmic}[1]
    \Procedure{FindBases}{$k,s_x,s_y$}
    \State \Call{Extend}{$k,s_x,s_y,\{(0,0),(1,0)\},1,0$}
    \EndProcedure

    \Procedure{Extend}{$k,s_x,s_y,A,x,y$}
    \CommentLine{$(x,y)$ is the latest location considered (either filled or left empty).}
    \State $j \gets \lvert A \rvert$
    \Comment{Number of elements}
    
    \State $G \gets \lvert [0,s_x]\times[0,s_y] \setminus (A+A) \rvert$
    \Comment{Number of gaps}

    \If{$(j = k) \wedge (G=0)$}
    \Call{Print}{$A$}
    \Comment{Found a basis}
    \EndIf

    \If{$j = k$}
    \Return
    \Comment{Reached full size}
    \EndIf

    \State $M \gets (k+j+1)(k-j)/2$
    \Comment{Max. sums to expect}
    \If{$M < G$}\label{line:gaps}
    \Return
    \Comment{Too many gaps}
    \EndIf

    \If{$x<s_x$}
      \State $x \gets x+1$
      \Comment{Proceed right}
    \ElsIf{$y<s_y$}
      \State $x \gets 0$
      \Comment{Begin next row}
      \State $y \gets y+1$
    \Else
      \State \Return
      \Comment{Reached the top right}
    \EndIf

    \If{$(x,y) \in A+A$}\Comment{Already covered?}\label{line:admissibility}
      \State \Call{Extend}{$k,s_x,s_y,A,x,y$}
      \Comment{Branch without $(x,y)$}
    \EndIf
    
    \State \Call{Extend}{$k,s_x,s_y,A \cup \{(x,y)\},x,y$}
    \Comment{Branch with $(x,y)$}
    
    \EndProcedure
  \end{algorithmic}
\end{algorithm}

During the search, two tests prune unfruitful branches.  One of them
(line~\ref{line:admissibility}) concerns unfillable holes in the
sumset.  Suppose that we are currently at $(x,y)$.  Because of the way
how the search proceeds, any location $(x',y')$ considered deeper in
the search will have $x'>x$ or $y'>y$ (or both).  Thus any such
elements will not generate the sum $(x,y)$, by the non-negativity of
coordinates.  If $(x,y)$ has not already been covered, then $(x,y)$
has to be included in the basis.

The other test (line~\ref{line:gaps}) is based on a counting argument.
Suppose that after placing $j$ elements there are $G$ gaps, or target
points not covered by the current sumset.  No matter where the
remaining $k-j$ elements are placed, they will generate at most
$M=(j+1)+(j+2)+\ldots+k = (k+j+1)(k-j)/2$ more sums.  If $M<G$, then
the current search branch cannot lead to any solutions.

This algorithm is quite simple, and there may be several ways to
improve it by exploiting the geometry of the problem.  For example,
instead of proceeding rowwise, the target rectangle can be explored in
a different order: after completing the bottom edge ($y=0$), do next
all of the left edge ($x=0$), then second row, second column, and so
on.  The idea is to introduce necessary conditions from both the left
and bottom edges early~on.  This change does not affect the validity
of the algorithm.  Empirically we observed that it saves about $37\%$
of the running time with 19-bases of the 9-square.

% original measurements:
% k=19 s=9 rowwise 2527 seconds
% k=19 s=9 L-wise  1590 seconds

Typically for a combinatorial branch-and-bound method, the time
requirement of this algorithm grows rapidly as $k$ increases.  We
implemented the algorithm in C++ and ran it on Intel Xeon E7-8890
processors (nominal clock frequency 2.2~GHz).  For 19-bases of the
9-square the search took $0.44$ hours of processor time; for 23-bases
of the 11-square it took $1058$ hours.  Results are summarized in
Table~\ref{tab:results_nonrestr_square} (squares) and
Table~\ref{tab:results_nonrestr_rect} (rectangles).

%%%%%%%%%%%%%%%%%%%%%%%%%%%%%%%%%%%%%%%%%%%%%%%%%%%%%%%%%%%%%%%%%%%%%%
\section{Meet-in-the-middle method for restricted bases} \label{sec:restricted}

In one dimension, i.e.\ for integer intervals, a meet-in-the-middle
(MIM) method to find the optimal restricted bases was proposed by
Kohonen~\cite{kohonen2014meet}.  In its simplest form the method
splits a restricted basis at its midpoint into two components, a
prefix and a suffix, which are then sought separately among the
admissible bases of a smaller interval.  It is much faster to consider
all pairs of these components than to search directly for restricted
bases by a method similar to Algorithm~\ref{alg:search}.  The largest
known optimal restricted bases for integer intervals have been
computed by this method, with $k^*(734,0)=48 $
\cite{kohonen2015early}.

\begin{figure}[tb]
	\centerline{\includegraphics[]{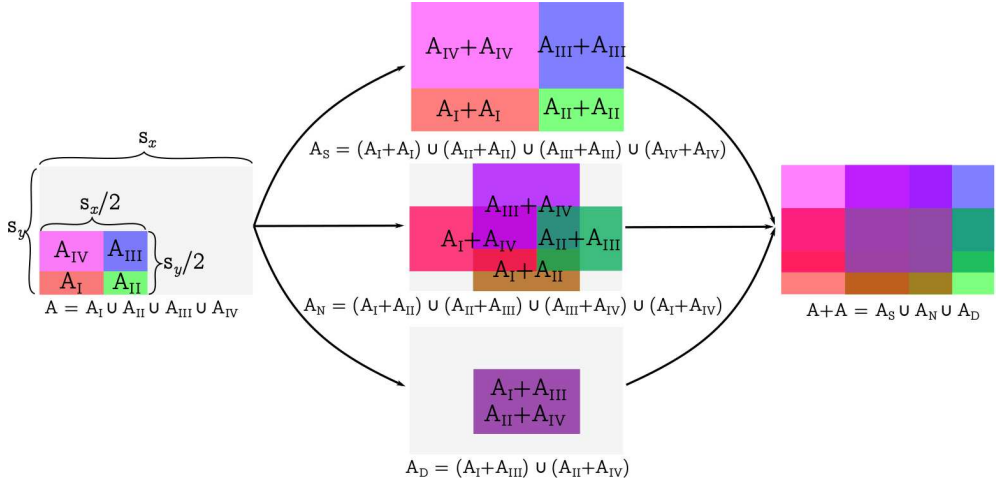}}
	\caption{MIM decomposition of a restricted basis $ A $.  The
          four components $ A_\I,\dots, A_\IV$ are contained in the
          colored rectangles (left).  Consequently, $ A+A $ (right) is
          the union of $A_\text{S} $, $A_\text{N} $ and $A_\text{D} $
          (center), which are the self, neighboring and diagonal sums
          of the components.  The extreme corners of $ A+A $ are
          covered only by the self sumsets, so $ A_\I,\dots, A_\IV $
          must be admissible bases for those rectangles (up to suitable
          coordinate transformations).}
	\label{fig:mim}
\end{figure}

\begin{algorithm}[tb]
  \caption{Find all restricted $k$-bases for $[0,s_x]\times[0,s_y]$}
  \label{alg:mim}
  \begin{algorithmic}[1]
    \Procedure{MIM}{$k,s_x,s_y$}
      \State $ h_x \gets s_x/2 $ \Comment{dimensions of rectangle containing $ A$}
      \State $ h_y \gets s_y/2 $
      \State $ a_x \gets \lfloor h_x/2 \rfloor $ \Comment{dimensions of rectangle containing $ A_\I $}
      \State $ a_y \gets \lfloor h_y/2 \rfloor $
      \State $ b_x \gets h_x-a_x-1 $             \Comment{dimensions of other rectangles}
      \State $ b_y \gets h_y-a_y-1 $
      \State $ \kmin_\I   \gets k(a_x,a_y)$		\Comment{look up minimum sizes of the components}
      \State $ \kmin_\II  \gets k(b_x,a_y)$
      \State $ \kmin_\III \gets k(b_x,b_y)$
      \State $ \kmin_\IV  \gets k(a_x,b_y)$
      \CommentLine{Iterate feasible ways of allocating $k$ among the four quadrants}
      \For{$(k_\I,k_\II,k_\III,k_\IV)$ such that $k_\I+k_\II+k_\III+k_\IV=k$}
        \If {$k_\I\ge \kmin_\I \;\wedge\; k_\II\ge \kmin_\II \;\wedge\; k_\III\ge \kmin_\III \;\wedge\; k_\IV\ge \kmin_\IV$}
          \CommentLine{Compute or look up admissible component bases}
          \State $\Bcal_\I \gets$  output from \Call{FindBases}{$k_\I,a_x,a_y$} 
          \State $\Bcal_\II \gets$ output from \Call{FindBases}{$k_\II,b_x,a_y$}
          \State $\Bcal_\III\gets$ output from \Call{FindBases}{$k_\III,b_x,b_y$}
          \State $\Bcal_\IV \gets$ output from \Call{FindBases}{$k_\IV,a_x,b_y$}
          \For{$ (B_\I, B_\II, B_\III, B_\IV) \in \Bcal_\I \times \Bcal_\II \times \Bcal_\III \times \Bcal_\IV $}\label{line:innerloop}
            \State $A_\I   \gets B_\I$
            \State $A_\II  \gets \{(h_x\!-\!x, \; y) : (x,y)\in B_\II\}$ \Comment{Mirror $x$ coordinates}
            \State $A_\III  \gets \{(h_x\!-\!x, \; h_y\!-\!y) : (x,y)\in B_\III\}$ \Comment{Mirror $x,y$ coordinates}
            \State $A_\IV  \gets \{(x, \; h_y\!-\!y) : (x,y)\in B_\IV\}$ \Comment{Mirror $y$ coordinates}
            \State $A \gets A_\I \cup A_\II \cup A_\III \cup A_\IV$ \Comment{Glue components}\label{line:glue}
            \If {$ A+A = R $} \Call{Print}{$A$} \Comment{Found a basis} \EndIf
          \EndFor
        \EndIf       
    \EndFor
    \EndProcedure
  \end{algorithmic}
\end{algorithm}

Here the MIM method is extended to the planar setting.  We want to
find all $k$-bases for $R=[0,s_x]\times[0,s_y] $, subject to the
restriction $A \subseteq R_h = [0,h_x]\times[0,h_y]$, where
$h_x=s_x/2>0$ and $h_y=s_y/2>0$.  First divide $R_h$ into four
disjoint rectangles as follows. Choose breaking points
$a_x \in [0,h_x-1]$ and $a_y \in [0,h_y-1]$ arbitrarily, and define
\begin{align*}
  R_\I   &= [0,a_x]\times[0,a_y], \\
  R_\II  &= [a_x+1,h_x]\times[0,a_y], \\
  R_\III &= [a_x+1,h_x]\times[a_y+1,h_y], \\
  R_\IV  &= [0,a_x]\times[a_y+1,h_y].
\end{align*}
These are the colored rectangles in Figure~\ref{fig:mim} (left).  Now
split a basis $A$ into components $ A_\I,A_\II,A_\III,A_\IV$ so that
$ A_\I = A \cap R_\I$, and similarly with the others.  By the
non-negativity of all coordinates, any sumset involving $A_\II$,
$A_\III$ or $A_\IV$ is completely outside the lower left corner
~$R_\I$.  So in order to have $A+A \supseteq R$ we need
$A_\I+A_\I \supseteq R_\I$.  That is, $A_\I$ must be an admissible
basis for~$R_\I$.  All candidates for~$A_\I$ can be listed by
Algorithm~\ref{alg:search}.

A similar argument applies in the lower right corner of the target,
with some necessary coordinate transformations.  Let
$C_\II = [h_x+a_x+1, s_x] \times [0,a_y]$.  Then we need
$A_\II+A_\II \supseteq C_\II$, since all the other component sumsets
are outside~$C_\II$.  Consider the ``mirror image'' of $A_\II$, namely
$B_\II = \{(h_x\!-\!x,\; y) : (x,y) \in A_\II\}$.  By construction, we
have $B_\II \subseteq [0,b_x]\times[0,a_y]$, where for convenience we
have written $b_x = h_x-a_x-1$.  Now the condition
$A_\II+A_\II \supseteq C_\II$ implies that
$B_\II+B_\II \supseteq [0,b_x]\times[0,a_y]$.  So $B_\II$ must be an
admissible basis for~$[0,b_x]\times[0,a_y]$, and again all candidates
can be found by Algorithm~\ref{alg:search}.

Similar conditions for $A_\III$ and~$A_\IV$ apply in the remaining two
corners.  Consequently, $A$ must be the union of four components,
which are (mirror images of) admissible bases of suitable rectangles.
Since we have so far only dealt with necessary conditions, we have not
lost any possible solutions.  The conditions guarantee only that the
four extreme corner regions are covered; for any candidate solution
$A = A_\I \cup A_\II \cup A_\III \cup A_\IV$ we must finally check
whether in fact $A+A \supseteq R$.

Algorithm~\ref{alg:mim} gives a formal description of the MIM method.
We choose $ a_x = \lfloor h_x/2 \rfloor$ and
$a_y = \lfloor h_y/2 \rfloor$ so the components have roughly equal
dimensions.  The final ingredient of the algorithm, on lines 8--14,
concerns how the overall budget of $k$ elements is allocated to the
four components.  Note that $A_\I$ need not be a minimal basis
for~$R_\I$.  It may have more than $k(a_x,a_y)$ elements, and indeed
this may be necessary to find any solutions for $A+A \supseteq R$.
The same goes for the other three components.

In order to determine the value of $k^*(s_x,s_y)$, just run
Algorithm~\ref{alg:mim} repeatedly, beginning with
$k=\kmin_\I+\kmin_\II+\kmin_\III+\kmin_\IV$ since certainly there are
no solutions below that size, and increase $k$ in steps of~$1$ until
some solutions are found.

\begin{example}
  A restricted basis $ A $ for $ R = [0,10]\times[0,10] $ satisfies
  $A\subseteq R_h=[0,5]\times[0,5]$.  The first quadrant of $R_h$ is
  $R_\I = [0,2]\times[0,2]$, and the other quadrants have the same
  size.  Since $ k(2,2) =4$, we have necessarily
  $|A| \geq 4+4+4+4 = 16 $.  There is only one $4$-basis for
  $[0,2]\times[0,2]$, so for $k=16$ there is only one combination to
  check in the innermost loop of Algorithm~\ref{alg:mim}.  But this
  combination does not give a basis for $[0,10]\times[0,10]$, so more
  than $16$ elements are needed.

  It turns out that $k=20$ is enough.  After some simple pruning
  conditions (not shown in Algorithm~\ref{alg:mim}) we find that the
  only possible allocations of $20$ elements are
  $(k_\I,k_\II,k_\III,k_\IV) = (4,6,4,6)$ and $(5,5,5,5)$.  There are
  nine $5$-bases and eighteen $6$-bases for $[0,2]\times[0,2]$, so the
  first allocation leads to $1 \cdot 18 \cdot 1 \cdot 18 = 324$
  combinations to be checked, and the second gives
  $9 \cdot 9 \cdot 9 \cdot 9 = 6561$ combinations.  Out of these, we
  find $17$ restricted solutions.  This is less than one second of
  computation.  In comparison, finding \emph{all} $20$-bases for the
  $10$-square with our implementation of Algorithm~\ref{alg:search}
  takes more than an hour.
\end{example}

There are a few ways to significantly prune the number of candidate
solutions that need to be checked.  Firstly, the complete sumset of a
candidate restricted basis does not have to be calculated immediately.
A necessary condition for a restricted basis is that any two
neighboring quadrants form a restricted basis along one of the
coordinate axes. It therefore suffices to first check whether this
condition is satisfied for all four neighboring quadrant pairs. Only
if the condition is met, then the full sumset needs be checked.

Secondly, often some of the component pieces have the same dimensions
(indeed all of them if $h_x,h_y$ are both odd).  If the pieces also
have the same cardinality, then the set of candidate solutions is the
same for both of them, up to suitable coordinate transformations.

\begin{example}
  Consider a restricted basis $ A $ for the square $[0,s]\times[0,s]$,
  with $s/2 = 2a+1$ odd and $a \ge 0$.  Each quadrant has the same
  dimensions $a_x=b_x=a_y=b_y=a$.  If all component sets also have
  equal cardinality, then the candidates for $A_\II$, $A_\III$ and
  $A_\IV$ are the same as for $A_\I$, up to suitable mirroring.
  Furthermore, if the sumset $ (A_\I \cup A_\II) + (A_\I \cup A_\II)$
  does not cover $[0,s]\times[0,a]$, then all candidate solutions
  containing any rotation of this pair can be pruned.
\end{example}

Thirdly, when components have different cardinalities, the order in
which they are glued matters.  One possible strategy is to first glue
component pairs of low cardinality, not only because they usually have
fewer component solutions to glue, but also because they are less
likely to produce possible gluings than pairs of higher
cardinality. Occasionally, a pairwise gluing that has no solutions
rules out all combinations containing high cardinality components.
Then these components do not even have to be computed in the first
place.

\begin{example}
  Let the cardinality of a square restricted basis be
  $k^* + \tilde{k}^*= 4\kmin+
  (\tilde{k}_\I+\tilde{k}_\II+\tilde{k}_\III+\tilde{k}_\IV) $,
  where $\tilde{k}_\I,\dots, \tilde{k}_\IV $ represent the number of
  extra elements in each quadrant. If $ \tilde{k}^*=3 $, then there
  are four ways to distribute the extra element:
  $(\tilde{k}_\I,\tilde{k}_\II,\tilde{k}_\III,\tilde{k}_\IV) =
  (0,0,0,3)$,
  $(0,0,1,2)$, $(0,1,0,2)$, or $(0,1,1,1)$. If the gluing with
  $(\tilde{k}_\I,\tilde{k}_\II) = (0,0) $ gives no solutions, then the
  candidate solutions containing pairs
  $(\tilde{k}_\III,\tilde{k}_\IV) = (0,3) $ and
  $(\tilde{k}_\III,\tilde{k}_\IV) = (1,2) $ are discarded. More
  importantly, solutions for the $ (\kmin + 3 )$-basis do not have to
  be computed at all.
\end{example}

%%%%%%%%%%%%%%%%%%%%%%%%%%%%%%%%%%%%%%%%%%%%%%%%%%%%%%%%%%%%%%%%%%%%%%
\section{Numerical results}
\label{sec:results}

We now describe some results obtained for small rectangles with
Algorithms \ref{alg:search} and \ref{alg:mim}.  Examples of minimal
bases are shown in Figures~\ref{fig:nr_bases} and \ref{fig:r_bases}.
We note that especially the restricted solutions in
Figure~\ref{fig:r_bases} exhibit regular structure that can perhaps be
generalized to larger bases.

In the result listings, $m$ is the number of all minimal bases, and
$ m_\text{u} $ is the number of ``unique'' bases after taking into
account rotation and mirror symmetries.  Each basis may have up to 8
symmetric variants if the target is square, and up to 4 variants
otherwise.
  
\begin{figure}[b]
  \centerline{\includegraphics[width=1\textwidth]{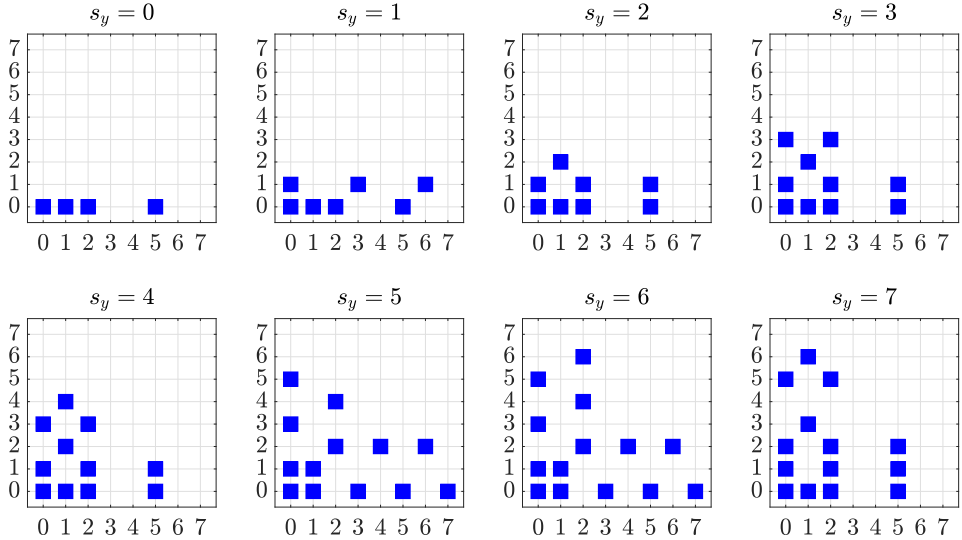}}
  \caption{Some minimal bases for $s_x=7$ and varying $s_y$.}
  \label{fig:nr_bases}
\end{figure}

\begin{figure}[t]
  \centerline{\includegraphics[width=1\textwidth]{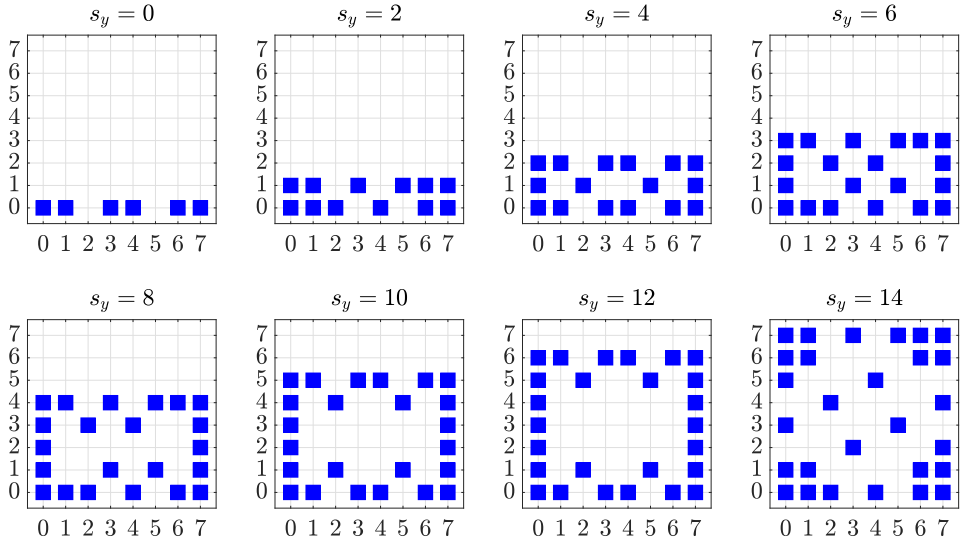}}
  \caption{Some minimal restricted bases for $ s_x = 14$, varying $s_y$.}
  \label{fig:r_bases}     
\end{figure}

\subsection{Results for squares}
Table~\ref{tab:results_nonrestr_square} summarizes the minimal bases
for squares up to $ s=11 $.  We observe that in the even-sided
instances $s=2,4,6,8,10$ one of the minimal solutions is the boundary
basis.  In the odd-sided instances $s=1,3,5,9,11$ one of the minimal
solutions is the L-shaped basis.  The case $s=7$ stands out as an
exception where the L-shaped basis is not minimal (see also
Figure~\ref{fig:nr_7}).  One may observe that when $s$ is even, the
number of minimal bases is relatively small.  This may be understood
as their cardinality is only $2s$, while in the odd cases the
cardinality is usually $2s+1$.

Table~\ref{tab:results_restr_square} summarizes the minimal restricted
bases for squares up to $s=46$.  For $s\le 26$ we generated and
counted the minimal bases.  For $28 \le s \le 46$ we only determined
the value of $k^*(s,s)$, but did not generate the bases.  For example,
since we found that there is no restricted $91$-basis for the
$46$-square, we can deduce that $k^*(46,46)=92$ as the boundary basis
has this size.  In all even-sided squares with $2 \le s \le 46$, we
have $k^*(s,s)=2s$, which is attained by the boundary basis.

Although the simple L-shaped and boundary bases provide minimal or
almost minimal solutions for small squares, having the full collection
of minimal solutions can be useful from an application perspective.
In some sensor array applications it is beneficial to avoid placing
sensor elements near each other, so as to avoid mutual coupling
effects that cause degraded performance~\cite{liu2017}.  This may lead
to a secondary optimization goal, and one may search the collection of
minimal-size bases in order to optimize for this goal.

\subsection{Results for rectangles}

The situation with rectangles is quite different from that with
squares: if the aspect ratio $\rho=(s_y+1)/(s_x+1)$ is far enough
from~1, then minimal bases may be much smaller than the
L-shaped and boundary bases.

Minimal bases for rectangles are summarized in
Table~\ref{tab:results_nonrestr_rect}, and Tables
\ref{tab:results_restr_rect} and~\ref{tab:results_restr_sy2} for the
restricted case.  In order to compare the minimal solutions to the
L-shaped and boundary bases, the quantity $ \Delta k = k-k_\text{t} $
is computed. Here $ k_\text{t} $ is the number of elements in the best
applicable trivial solution, which is the boundary basis when $s_x$
and $s_y$ are even, and the L-shaped basis otherwise, except when
$ s_y = 0 $ where the trivial solution is a one-dimensional basis with
$\lceil s_x/2 \rceil +1 $ elements.

In general, minimal bases use increasingly fewer elements than the
trivial solutions as the aspect ratio deviates further from~1. This is
apparent from Figure~\ref{fig:rho_vs_krel}, which shows the ratio
$ k/k_t $ for minimal bases as a function of aspect ratio.  We observe
a similar behavior for minimal restricted bases in
Table~\ref{tab:results_restr_rect}. In fact a kind of threshold seems
to exist near $s_y \approx s_x/2$, such that below this threshold the
minimal solutions are smaller than trivial, and there are few of them.
Above the threshold the minimal solutions match the trivial, and there
are many of them.  We have currently no explanation for such a
threshold nor for its exact location.

Another peculiarity is illustrated in Figure~\ref{fig:r_sy2}, which
shows two minimal restricted bases for which the number of elements
actually decreases as the target width increases.  Not only is
$k^*(62,2) = 28 > k^*(64,2) = 26$, but the number of solutions for the
two cases is also drastically different.  The former has $125247$
unique solutions, whereas the latter has only~$1$.  The solutions for
$s_y=2 $ listed in Table~\ref{tab:results_restr_sy2} reveal that a
similar effect also occurs for $ s_x=104 $ and $ 116 $.  The same also
applies to $ s_y=0 $, since $ k^*(s_x,2) = 2k^*(s_x,0) $ by
Lemma~\ref{lemma:shallow_restr}.

An overview of currently known minimal restricted bases is shown in
Figure~\ref{fig:r_map}.  The colors of the pixels correspond to the
minimal number of elements.  At the present, bases up to about $k=50$
are practical to list exhaustively.  For clarity of presentation,
restricted one-dimensional bases are not considered here for
$ s_x>120 $.

\begin{figure}[!b]
  \centerline{\includegraphics[width=1\textwidth]{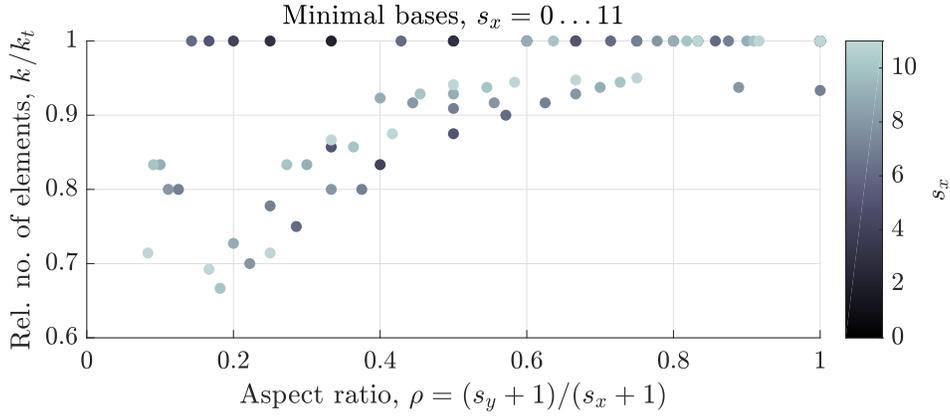}}
  \caption{Number of elements in minimal bases w.r.t. trivial
    bases. Trivial solutions are not optimal for low aspect ratios.}
    \label{fig:rho_vs_krel}
\end{figure}

\begin{figure}[!b]
\begin{subfigure}{1\textwidth}
        \centerline{\includegraphics[width=1\textwidth]{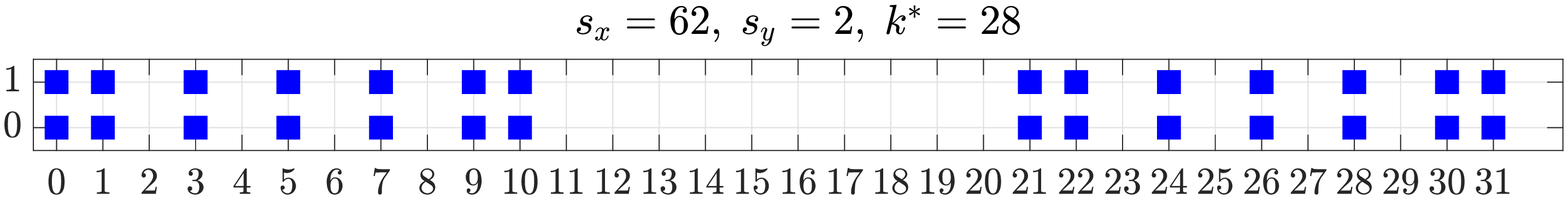}}
        \label{fig:r_sy2_a}
    \end{subfigure}
    	\begin{subfigure}[b]{1\textwidth}
        \centerline{\includegraphics[width=1\textwidth]{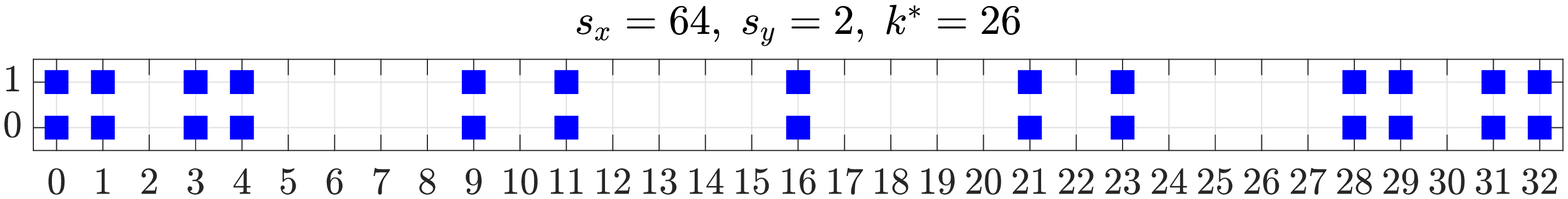}}
       \label{fig:r_sy2_b}
    \end{subfigure}
    \caption{Two restricted bases for $ s_y=2 $, for which the minimal
      number of elements decreases as the rectangle width increases.}
        \label{fig:r_sy2}
\end{figure}

\begin{figure}[!b]
        \centerline{\includegraphics[width=1\textwidth]{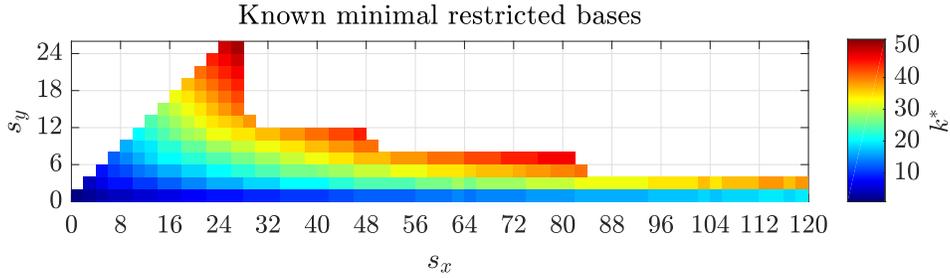}}
	\caption{Minimal number of elements in restricted bases.}
        \label{fig:r_map}
\end{figure}

%%%%%%%%%%%%%%%%%%%%%%%%%%%%%%%%%%%%%%%%%%%%%%%%%%%%%%%%%%%%%%%%%%%%%%
\section{Bounds for large-scale behaviour}
\label{sec:largescale}

For very large rectangles it seems difficult to determine the minimum
basis size exactly.  Towards understanding the large-scale behaviour
we can offer some upper and lower bounds.  We relate the basis size
$k=|A|$ to the number of target elements
$N = \left\lvert\,[0,s_x]\times[0,s_y]\,\right\rvert =
(s_x+1)(s_y+1)$,
which may be understood as the \emph{target area} measured in grid
points.  The \emph{efficiency} of a basis is defined as
\[
c = N/k^2.
\]
The shape of the target is characterized by its \emph{aspect ratio}
$\rho = (s_y+1)/(s_x+1)$.

\subsection{Upper bounds}

A crude upper bound on efficiency is obtained by observing that from
$k$ elements at most $(k+1)k/2$ different pairwise sums can be formed,
considering that $a+b=b+a$ and that sums of the form $a+a$ are
allowed.  It follows that $N \le (k+1)k/2$, so for any planar basis we
have
\begin{equation*}
  c \le 0.5 + O\bigl(1/\sqrt{N}\bigr).
  \label{eq:upperany}
\end{equation*}
In one dimension, upper bounds tighter than $0.5$ have been
established by analytic and combinatorial methods.  For all $s_x$
large enough, by Yu's Theorem~1.1 in~\cite{yu2015} we have
\begin{equation}
  s_x / k(s_x,0)^2 \le 0.45851 = \alpha,
  \label{eq:yu_nonrestr}
\end{equation}
and by Yu's Theorem~1.2 in~\cite{yu2009} we have
\begin{equation}
  s_x / k^*(s_x,0)^2 \le 0.41983 = \beta.
  \label{eq:yu_restr}
\end{equation}
Combining Yu's theorems with simple counting, we obtain the following
bounds with rectangles of small constant height.  For brevity, if $P$
is a set of points, we denote $P_y = \{x : (x,y) \in P\}$ and call
this the \emph{row $y$} of~$P$.

\begin{theorem}
  For all $s_x$ large enough, any basis for $[0,s_x]\times[0,1]$ has
  efficiency $c < 0.4311$.
  \label{thm:upper1}
\end{theorem}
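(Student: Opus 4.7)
The plan is to decompose $A$ by rows and combine Yu's one-dimensional bound on the bottom row with a plain counting argument for the mixed sums producing the top row. Using the notation $P_y=\{x:(x,y)\in P\}$ from the paper, let $k_0=|A_0|$ and $k_1=|A_1|$, so $k=k_0+k_1$.

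First I would observe how each row of the sumset is generated. Since every $y$-coordinate in $A$ lies in $\{0,1\}$, a sum $(x_1,y_1)+(x_2,y_2)$ with $y_1+y_2=0$ forces $y_1=y_2=0$, while $y_1+y_2=1$ forces exactly one of $y_1,y_2$ to be $1$. Hence
\[
(A+A)_0 = A_0+A_0, \qquad (A+A)_1 = A_0+A_1.
\]
Both must contain $[0,s_x]$. The first condition says $A_0$ is a one-dimensional basis for $[0,s_x]$, so Yu's bound~\eqref{eq:yu_nonrestr} gives $k_0\ge\sqrt{s_x/\alpha}$ for all $s_x$ large enough. The second, combined with $|A_0+A_1|\le k_0 k_1$, gives $k_0k_1\ge s_x+1$.

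Next I minimize $k=k_0+k_1$ under these constraints. Given $k_0$, the second constraint implies $k\ge k_0+(s_x+1)/k_0$, a function of $k_0$ that is strictly increasing once $k_0>\sqrt{s_x+1}$. Because $1/\alpha>1$, the Yu bound places $k_0$ well inside the increasing region, so the minimum is attained at the left endpoint $k_0=\sqrt{s_x/\alpha}$, giving
\[
k \ge \sqrt{s_x/\alpha} + (s_x+1)\sqrt{\alpha/s_x} = \sqrt{s_x}\,\frac{1+\alpha}{\sqrt{\alpha}} + O\bigl(1/\sqrt{s_x}\bigr).
\]

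Squaring and plugging into $c = 2(s_x+1)/k^2$ yields $c \le 2\alpha/(1+\alpha)^2 + o(1)$. With $\alpha = 0.45851$ the limiting value is $2\alpha/(1+\alpha)^2 \approx 0.43109$, which is strictly less than $0.4311$, so the stated bound holds for all $s_x$ large enough. The only real obstacle is bookkeeping: one must verify that the $o(1)$ error does not consume the small gap between $0.43109\ldots$ and $0.4311$, but since that gap is positive and the error terms from Yu's inequality and from the $(s_x+1)/s_x$ factor are both of order $1/\sqrt{s_x}$ or smaller, this is routine once the main inequality is in place.
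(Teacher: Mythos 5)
Your proof is correct and takes essentially the same approach as the paper's: split $A$ into its two rows, apply Yu's bound \eqref{eq:yu_nonrestr} to $A_0$ and the counting bound $|A_0+A_1|\le k_0k_1$ to the top row, then optimize over the split of $k$. The only cosmetic difference is that you minimize $k$ for fixed $s_x$ while the paper maximizes $s_x$ for fixed $k$; both give $s_x/k^2 \le \alpha/(1+\alpha)^2$ and hence $c<0.4311$.
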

\begin{proof}
  Assume that $s_x$ is large enough that \eqref{eq:yu_nonrestr} holds.
  Without loss of generality let $A$ be admissible, and
  let its rows $A_0,A_1$ contain $k_0,k_1$ elements, respectively.
  Now $A_0+A_0$ must cover $R_0 = [0,s_x]$, and $A_0+A_1$ must cover
  $R_1 = [0,s_x]$.  By applying \eqref{eq:yu_nonrestr} on row~$0$, and
  by counting sums on row~$1$, we obtain
  \begin{align*}
    s_x &\le \alpha k_0^2,\\
    s_x &\le k_0 k_1.
  \end{align*}
  For any $k$, the minimum of these two bounds is maximized at $k_1 =
  \alpha k_0$, implying that $k = (1+\alpha)k_0$ and
  \[
  s_x/k^2 \le \frac{\alpha}{(1+\alpha)^2} < 0.215542.
  \]
  Since $N = |R| = 2(s_x+1)$, we have $N/k^2 < 0.4311$ for $s_x$ large
  enough.
\end{proof}

\begin{theorem}
  For all $s_x$ large enough, any basis for $[0,s_x]\times[0,2]$ has
  efficiency $c < 0.4190$.
  \label{thm:upper2}
\end{theorem}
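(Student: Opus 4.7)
The plan is to mimic the proof of Theorem~\ref{thm:upper1} but with three rows instead of two. Let $A$ be an admissible basis for $R=[0,s_x]\times[0,2]$, and split it into rows $A_0,A_1,A_2$ with cardinalities $k_0,k_1,k_2$ and $k=k_0+k_1+k_2$. Row $0$ of $R$ can only be covered by $A_0+A_0$, row $1$ only by $A_0+A_1$, and row $2$ by $(A_0+A_2)\cup(A_1+A_1)$, since these are the only row-pairs whose $y$-coordinates sum into $\{0,1,2\}$. Applying Yu's inequality~\eqref{eq:yu_nonrestr} to the first row and counting bounds on the others yields, for $s_x$ large,
\begin{align*}
  s_x &\le \alpha k_0^2, \\
  s_x &\le k_0 k_1, \\
  s_x &\le k_0 k_2 + k_1(k_1+1)/2.
\end{align*}

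The main step is to minimise $k=k_0+k_1+k_2$ subject to these constraints. Writing $k_i=\lambda_i\sqrt{s_x}$, the asymptotic constraints become $\lambda_0\ge 1/\sqrt{\alpha}$, $\lambda_0\lambda_1\ge 1$, and $\lambda_0\lambda_2+\lambda_1^2/2\ge 1$. Since $1/\sqrt{\alpha}>\sqrt{2}$, for any fixed feasible $\lambda_0$ the objective $\lambda_0+\lambda_1+(1-\lambda_1^2/2)/\lambda_0$ is increasing in $\lambda_1$ on $[1/\lambda_0,\sqrt{2}]$ (its derivative $1-\lambda_1/\lambda_0$ is positive there), so the optimum lies at $\lambda_1=1/\lambda_0$; one checks that the alternative regime $\lambda_1\ge\sqrt{2},\lambda_2=0$ gives the weaker lower bound $1/\sqrt{\alpha}+\sqrt{2}$. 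This reduces the problem to minimising $f(\lambda_0)=\lambda_0+2/\lambda_0-1/(2\lambda_0^3)$ over $\lambda_0\ge 1/\sqrt{\alpha}$.

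A short calculation shows that $f'(\lambda_0)=1-2\lambda_0^{-2}+(3/2)\lambda_0^{-4}$, regarded as a quadratic in $u=\lambda_0^{-2}$, has negative discriminant, so $f'>0$ and $f$ is strictly increasing. The minimum is therefore at $\lambda_0=1/\sqrt{\alpha}$, giving $\lambda_1=\sqrt{\alpha}$, $\lambda_2=\sqrt{\alpha}(1-\alpha/2)$ and
\[
  \lambda_0+\lambda_1+\lambda_2 \;=\; \frac{1+2\alpha-\alpha^2/2}{\sqrt{\alpha}}.
\]
Hence $c=3(s_x+1)/k^2 \le 3\alpha/(1+2\alpha-\alpha^2/2)^2 + o(1)$, and plugging in $\alpha=0.45851$ yields a value just below $0.41900$, so $c<0.4190$ for $s_x$ large enough. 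The main obstacle is the constrained optimisation itself: justifying that the critical point is the global minimum over the feasible region requires the case split at $\lambda_1=\sqrt{2}$ together with the monotonicity of $f$, and since the numerical margin below $0.4190$ is thin, the constants must be tracked with care.
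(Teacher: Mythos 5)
Your proof is correct and follows essentially the same route as the paper: the identical three inequalities ($s_x \le \alpha k_0^2$, $s_x \le k_0 k_1$, $s_x \le k_0 k_2 + k_1^2/2 + k_1/2$) followed by showing the minimum of $k_0+k_1+k_2$ occurs at their common intersection, yielding $s_x/k^2 \le \alpha/(1+2\alpha-\alpha^2/2)^2$; you merely spell out the ``routine manipulations'' (the case split at $\lambda_1=\sqrt{2}$ and the monotonicity of $f$) that the paper leaves implicit. One wording quibble: the regime $\lambda_1\ge\sqrt{2}$, $\lambda_2=0$ yields the \emph{larger} value $1/\sqrt{\alpha}+\sqrt{2}\approx 2.89$ versus $\approx 2.68$, so calling it ``the weaker lower bound'' is backwards---but that larger value is precisely what justifies discarding that regime, so the argument stands.
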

\begin{proof}
  Assume that $s_x$ is large enough that \eqref{eq:yu_nonrestr} holds.
  Without loss of generality let $A$ be admissible, and let its rows
  $A_0,A_1,A_2$ contain $k_0,k_1,k_2$ elements, respectively.  Now
  $A_0+A_0$ must cover $R_0 = [0,s_x]$, and $A_0+A_1$ must cover $R_1
  = [0,s_x]$, and finally $(A_0+A_2) \cup (A_1+A_1)$ must cover $R_2 =
  [0,s_x]$.  By applying \eqref{eq:yu_nonrestr} on row~$0$, and by
  counting sums on rows $1$ and~$2$, we obtain
  \begin{align*}
    s_x &\le \alpha k_0^2,\\
    s_x &\le k_0 k_1,\\
    s_x &\le k_0 k_2 + k_1^2/2 + k_1/2.
  \end{align*}
  For any $k$, the minimum of these three bounds is maximized at their
  intersection, and by routine manipulations we obtain
  \[
  s_x/k^2 \le \frac{\alpha}{(1+2\alpha-\alpha^2/2)^2} < 0.139663
  \]
  for $s_x$ large enough.  Since $N = |R| = 3(s_x+1)$, we have $N/k^2
  < 0.4190$ for $s_x$~large enough.
\end{proof}

Any improvements to the one-dimensional bound~\eqref{eq:yu_nonrestr}
will imply corresponding improvements to Theorems \ref{thm:upper1}
and~\ref{thm:upper2}.  One could also apply the same proof technique
with larger constant values of $s_y$, but it then becomes more
complicated to maximize the simultaneous upper bounds of $s_x$.
Numerical maximization suggests decreasing upper bounds as $s_y$
increases, for example, around $0.4126$ with $s_y=3$, and around
$0.4087$ with $s_y=4$.  This begs the question: what happens when $s_y$ goes to infinity?

Turning our attention to the restricted case we obtain the following
bounds.

\begin{theorem}
  For all $s_x$ large enough, any restricted basis for $[0,s_x]\times[0,2]$
  has efficiency $c < 0.3149$.
\end{theorem}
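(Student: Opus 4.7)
The plan is to adapt the technique of Theorem~\ref{thm:upper2}, but to exploit the restriction more aggressively: since $A \subseteq [0,s_x/2]\times[0,1]$, \emph{both} rows of $A$ lie in $[0,s_x/2]$, so the tighter one-dimensional restricted bound \eqref{eq:yu_restr} applies to each of them, not only to row~$0$ as in the unrestricted setting.

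First I would split an arbitrary restricted basis $A$ into its two rows $A_0,A_1 \subseteq [0,s_x/2]$ of cardinalities $k_0,k_1$ with $k_0+k_1=k$. Row~$0$ of the target is covered only by $A_0+A_0$, so $A_0$ is a restricted basis for $[0,s_x]$; row~$2$ is covered only by $A_1+A_1$, so $A_1$ is also a restricted basis for $[0,s_x]$; and row~$1$ must be covered by $A_0+A_1$, for which I would use only the crude count $|A_0+A_1|\le k_0k_1$. Applying \eqref{eq:yu_restr} to both rows yields the three asymptotic constraints
\begin{align*}
s_x &\le \beta k_0^2,\\
s_x+1 &\le k_0 k_1,\\
s_x &\le \beta k_1^2.
\end{align*}

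Next I would maximize $s_x$ over allocations $k_0+k_1=k$. The outer two constraints are symmetric in $(k_0,k_1)$, and since $\beta<1$ a short case analysis shows the minimum of the three bounds is maximized at $k_0=k_1=k/2$, giving $s_x \le \beta k^2/4 + O(1)$. Using $N=3(s_x+1)$, this produces
\[
c \;=\; N/k^2 \;\le\; \frac{3\beta}{4} + o(1) \;\approx\; 0.31487 \;<\; 0.3149
\]
for all sufficiently large $s_x$.

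Because the argument follows the same pattern as the preceding two theorems, I do not expect a serious obstacle. The only point deserving care is the claim that the balanced allocation $k_0=k_1=k/2$ is genuinely optimal: if $k_0\ne k_1$, then the smaller of $\beta k_0^2,\beta k_1^2$ is strictly below $\beta k^2/4$, and since $k_0k_1\le k^2/4$ always holds, no unbalanced allocation can improve the bound. The reason the constant $\beta$ (rather than $\alpha$, as in Theorem~\ref{thm:upper2}) appears in both outer constraints is precisely the restriction hypothesis, which forces $A_1$ as well as $A_0$ into $[0,s_x/2]$.
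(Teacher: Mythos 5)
Your proof is correct and follows essentially the same route as the paper: your observation that both rows of $A$ must be restricted one-dimensional bases for $[0,s_x]$ is exactly Lemma~\ref{lemma:edges_restr}, which the paper packages as Lemma~\ref{lemma:shallow_restr} (i.e.\ $k^*(s_x,2)=2k^*(s_x,0)$) and then combines with \eqref{eq:yu_restr} to obtain the same constant $3\beta/4<0.3149$. Your explicit optimization over the allocation $(k_0,k_1)$ is harmless but not needed, since the two outer constraints alone already give $s_x\le\min\bigl(\beta k_0^2,\beta k_1^2\bigr)\le\beta k^2/4$.
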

\begin{proof}
  Combine Lemma~\ref{lemma:shallow_restr} with the
  bound~\eqref{eq:yu_restr} and the fact that $|R|=3(s_x+1)$.
\end{proof}

\begin{theorem}
  For all $s_x$ large enough, any restricted basis for $[0,s_x]\times[0,4]$
  has efficiency $c < 0.3585$.
\end{theorem}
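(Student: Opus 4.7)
The plan is to adapt the row-counting argument of Theorem~\ref{thm:upper2} to a restricted target of height~$4$; crucially, in the restricted setting Yu's bound~\eqref{eq:yu_restr} can be applied to \emph{both} the top and bottom rows rather than only to the bottom one, which is what buys the improvement.

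Let $A$ be an admissible restricted basis for $[0,s_x]\times[0,4]$, so $A\subseteq[0,h_x]\times[0,2]$ with $h_x=s_x/2$, and let $A_0,A_1,A_2$ denote its rows with cardinalities $k_0,k_1,k_2$. I would examine each target row in turn. Rows $0$ and $4$ can only be produced by $A_0+A_0$ and $A_2+A_2$ respectively, so by Lemma~\ref{lemma:edges_restr} both $A_0$ and $A_2$ are one-dimensional restricted bases for $[0,s_x]$, and \eqref{eq:yu_restr} yields $s_x\le\beta k_0^2$ and $s_x\le\beta k_2^2$. Rows $1$ and $3$ are produced only by $A_0+A_1$ and $A_1+A_2$, giving $s_x\le k_0k_1$ and $s_x\le k_1k_2$ by simple counting. (Row~$2$ contributes the weaker bound $s_x\le k_0k_2+k_1(k_1+1)/2$, which I expect not to be binding at the optimum.)

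I would then minimize $k=k_0+k_1+k_2$ subject to these four main constraints. The feasible region is symmetric under $k_0\leftrightarrow k_2$, and since $k_0,k_2$ appear only in lower-bound constraints, each wants to be as small as possible; the tight symmetric choice is $k_0=k_2=\sqrt{s_x/\beta}$, which in turn forces $k_1\ge\sqrt{\beta s_x}$. Hence
\[
k \;\ge\; \sqrt{s_x}\,(2+\beta)/\sqrt{\beta},
\qquad\text{i.e.,}\qquad
s_x/k^2 \;\le\; \beta/(2+\beta)^2.
\]
Combining with $N=5(s_x+1)$ gives $c \le 5\beta/(2+\beta)^2$ up to lower-order terms; with $\beta=0.41983$ this evaluates to approximately $0.35849$, so $c<0.3585$ for all $s_x$ large enough.

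The main obstacle I anticipate is the numerical tightness of the bound: $0.3585$ is only narrowly above $5\beta/(2+\beta)^2$, so the rounding of $\beta$ and the slack coming from $N=5(s_x+1)$ rather than $5s_x$ must be tracked carefully. I would also verify that row~$2$ is indeed slack at the candidate optimum, since substituting gives $k_0k_2+k_1^2/2=s_x/\beta+\beta s_x/2\approx 2.59\,s_x\gg s_x$, and that the symmetric choice is globally optimal, which follows from $f(m)=2m+s_x/m$ being increasing on $m\ge\sqrt{s_x/\beta}$ (using $\beta<1/2$). The rest is a direct restricted analogue of Theorem~\ref{thm:upper2}.
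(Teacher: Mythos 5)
Your proposal is correct and follows essentially the same route as the paper: apply Yu's restricted bound \eqref{eq:yu_restr} to rows $0$ and $4$ (via Lemma~\ref{lemma:edges_restr}), count sums on rows $1$ and $3$, and optimize the resulting four constraints to get $s_x/k^2 \le \beta/(2+\beta)^2$ and hence $c<0.3585$ from $N=5(s_x+1)$. Your explicit verification that the symmetric point is the true optimum and that the row-$2$ constraint is slack is a welcome bit of extra care that the paper leaves implicit.
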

\begin{proof}
  Assume $s_x$ is large enough that \eqref{eq:yu_restr} holds.  Let
  $A$ be a restricted basis for~$R$, and let $k_0,k_1,k_2$ be the
  cardinalities of its rows.  By applying \eqref{eq:yu_restr} on rows
  $0$ and~$4$ of the target, and by counting sums on rows $1$ and~$3$,
  we obtain
  \begin{align*}
    s_x &\le \beta k_0^2,\\
    s_x &\le k_0 k_1,\\
    s_x &\le k_1 k_2,\\
    s_x &\le \beta k_2^2.
  \end{align*}
  The minimum of these four bounds is maximized at their intersection,
  where $k_0=k_2$ and $k_1=\beta k_0$, thus $k = (2+\beta)k_0$.
  Then we obtain
  \[
  s_x/k^2 \le \frac{\beta}{(2+\beta)^2} < 0.071698.
  \]
  Since $N = |R| = 5(s_x+1)$, we have $N/k^2 < 0.3585$ for $s_x$ large
  enough.
\end{proof}

% As a final remark, we have observed from our numerical results in
% section~\ref{sec:results} that the efficiency of minimal bases in
% general tends towards $c\approx 1/4 $ with increasing target
% size. However as shown above, this value is not necessarily reached
% for all aspect ratios.

\subsection{Lower bounds}

As with one-dimensional bases, also in planar bases it is relatively
easy to obtain an efficiency of approximately $1/4$ for large
rectangles.  For squares this is particularly easy: the L-shaped basis
for an $s$-square has $k=2s+1$, so $c = 0.25 + O(1/s)$.  The boundary
basis has $k=2s$, so its efficiency has the same asymptotic form.

For non-square rectangles, however, the L-shaped and boundary bases
are asymptotically suboptimal.  Consider rectangles
$[0,s_x]\times[0,s_y]$ with a constant aspect ratio $\rho \ne 1$.  The
L-shaped basis has $k=s_x+s_y+1 = (1+\rho)s_x+\rho$, so
\[
c \to \rho / (1+\rho)^2 < 1/4
\]
as~$s_x \to \infty$.  The case with the boundary basis is similar.
For example, if the aspect ratio is $\rho=9$, then both the L-shaped
and boundary bases have only $c\to 0.09$ in the limit.

When $\rho \ne 1$, one may prefer one of the following two parametric
constructions that achieve an asymptotic efficiency of~$1/4$.  Both
constructions are illustrated in Figure~\ref{fig:quarter}.  We use
here the notation
\[
[a,(t),b] = \{a,\; a+t,\; a+2t,\; \ldots,\; b\}
\]
for a finite arithmetic progression from $a$ to $b$ with step
length~$t$, with the provision that $b-a$ is divisible by~$t$.

\begin{figure}[b]
  \begin{subfigure}[b]{0.48\textwidth}
    \centerline{\includegraphics[width=1\textwidth]{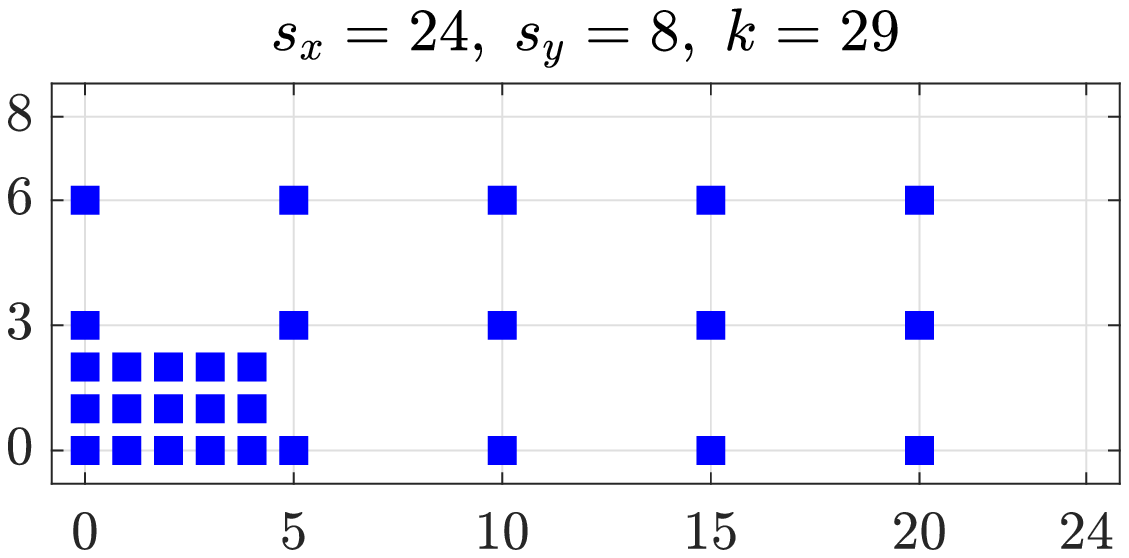}}
    \caption{}
    \label{fig:quarter_ds}
  \end{subfigure}
  \hfill
  \begin{subfigure}[b]{0.48\textwidth}
    \centerline{\includegraphics[width=1\textwidth]{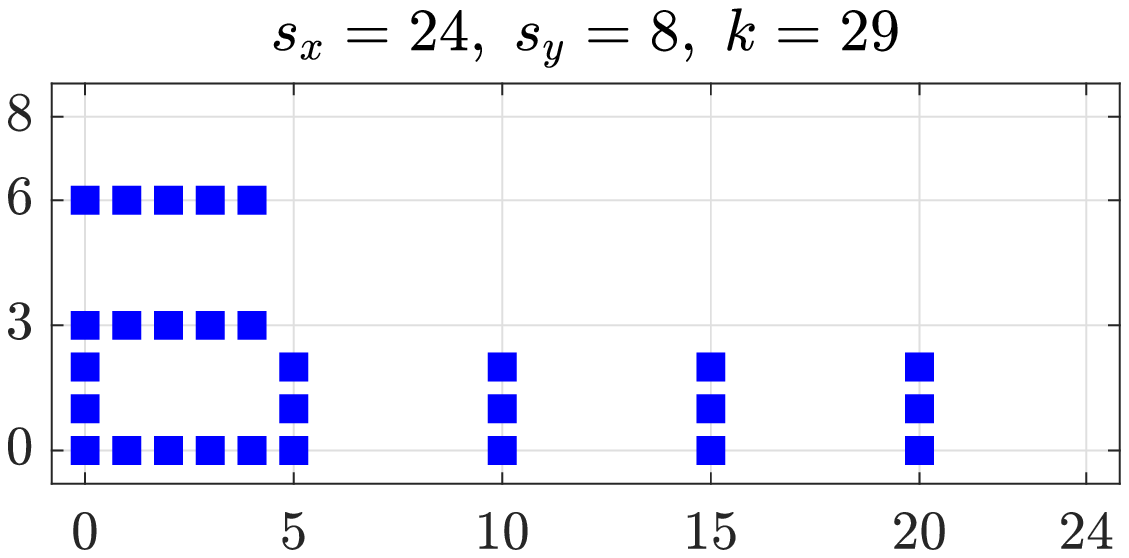}}
    \caption{}
    \label{fig:quarter_sb}
  \end{subfigure}
  \caption{Two basis constructions for rectangles: (a) a dense-sparse
    basis, (b) a short-bars basis, both with parameters $t_x=5$,
    $t_y=3$.  Both have only $29$ elements while an L-shaped basis for
    the same rectangle would have $24+8+1=33$.}
    \label{fig:quarter}
\end{figure}

\begin{definition}
  \label{def:densesparse}
  The \emph{dense-sparse basis} with parameters $t_x,t_y \ge 1$ is the
  set $A = B \cup C$, where $B = [0,t_x-1] \times [0,t_y-1]$ and $C =
  [0,(t_x),t_x^2-t_x] \times [0,(t_y),t_y^2-t_y]$.
\end{definition}

\begin{theorem}
  \label{thm:densesparse}
  The dense-sparse basis has $|A|=2t_xt_y-1$ and $A+A \supseteq
  [0,t_x^2-1]\times[0,t_y^2-1]$.
\end{theorem}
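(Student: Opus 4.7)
The statement naturally splits into two parts: a cardinality count and a sumset inclusion. For the cardinality, the plan is to compute $|B|$ and $|C|$ separately and then apply inclusion--exclusion. Since $B = [0,t_x-1]\times[0,t_y-1]$ is a full rectangular grid, $|B| = t_xt_y$. The set $C$ is a Cartesian product of two arithmetic progressions of lengths $t_x$ and $t_y$, so $|C| = t_xt_y$ as well. The crux is to verify that $B\cap C = \{(0,0)\}$: in the $x$-direction, the only multiple of $t_x$ lying in $\{0,1,\dots,t_x-1\}$ is $0$ itself, and the same holds in the $y$-direction, so the intersection of the two rectangles reduces to a single point (this works uniformly for $t_x,t_y\ge 1$). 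Inclusion--exclusion then gives $|A| = 2t_xt_y - 1$.

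For the sumset inclusion, the key observation is that it suffices to show $B + C \supseteq [0,t_x^2-1]\times[0,t_y^2-1]$, since $B+C \subseteq A+A$. Because $B$ and $C$ are themselves Cartesian products, $B+C$ factors coordinate-wise, so the claim reduces to the one-dimensional statement that $[0,t_x-1] + [0,(t_x),t_x^2-t_x] = [0,t_x^2-1]$ (and similarly for $y$). This is nothing more than base-$t_x$ representation: every integer $n \in [0,t_x^2-1]$ can be written uniquely as $n = b + c\cdot t_x$ with $b \in \{0,\dots,t_x-1\}$ and $c \in \{0,\dots,t_x-1\}$, where the second coordinate contributes an element of $C$'s progression.

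I do not anticipate any serious obstacle here; the proof is essentially bookkeeping. The only mild subtlety is justifying $|B\cap C|=1$ cleanly across the degenerate cases $t_x=1$ or $t_y=1$, but the argument above handles these uniformly since the progression $[0,(t_x),t_x^2-t_x]$ degenerates to $\{0\}$ precisely when $[0,t_x-1]$ does. The overall structure of the write-up will therefore be: (i) state that the two defining rectangles overlap only at the origin and conclude the cardinality formula; (ii) invoke the base-$t_x$/base-$t_y$ digit decomposition to conclude $B+C$ covers the target rectangle.
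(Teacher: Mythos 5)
Your proposal is correct and follows essentially the same route as the paper's proof: both count $|B|=|C|=t_xt_y$, observe $B\cap C=\{(0,0)\}$, and cover the target via $B+C\subseteq A+A$ using the coordinate-wise decomposition $x=b_x+c_x$, which is exactly your base-$t_x$ digit argument. No gaps.
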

\begin{proof}
  Since $|B|=|C|=t_xt_y$ and $B \cap C = \{(0,0)\}$, the claim on
  $|A|$ follows.  For any point $(x,y) \in R$, let $x = b_x + c_x$
  with $b_x \in [0,t_x-1]$ and $c_x \in [0,(t_x),t_x^2-t_x]$.
  Similarly let $y = b_y + c_y$ with $b_y \in [0,t_y-1]$ and $c_y \in
  [0,(t_y),t_y^2-t_y]$.  Now $(x,y) = (b_x,b_y) + (c_x,c_y)$ with
  $(b_x,b_y)\in B$ and $(c_x,c_y) \in C$.  Thus $(x,y) \in B+C
  \subseteq A+A$.
\end{proof}

\begin{definition}
  \label{def:shortbars}
  The \emph{short-bars basis} with parameters $t_x,t_y \ge 1$ is the
  set $A = B \cup C$, where $B = [0,t_x-1] \times [0,(t_y),t_y^2-t_y]$
  and $C = [0,(t_x),t_x^2-t_x] \times [0,t_y-1]$.
\end{definition}

\begin{theorem}
  \label{thm:shortbars}
  The short-bars basis has $|A|=2t_xt_y-1$ and $A+A \supseteq
  [0,t_x^2-1]\times[0,t_y^2-1]$.
\end{theorem}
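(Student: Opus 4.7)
The plan is to follow the same template as the proof of Theorem~\ref{thm:densesparse}, since the short-bars basis differs from the dense-sparse basis only in the way the $x$ and $y$ coordinates are paired inside the two sub-rectangles. Here $B$ combines the dense $x$-direction with the sparse $y$-direction, while $C$ does the opposite. The cardinality counts are identical to the dense-sparse case: $B$ consists of $t_y$ full ``short bars'' of length $t_x$ (one at each $y \in \{0,t_y,2t_y,\ldots,t_y^2-t_y\}$), giving $|B|=t_xt_y$, and symmetrically $|C|=t_xt_y$. The intersection $B \cap C$ consists of points whose $x$-coordinate lies in both $[0,t_x-1]$ and $[0,(t_x),t_x^2-t_x]$, forcing $x=0$; the same reasoning gives $y=0$. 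Hence $B \cap C = \{(0,0)\}$ and $|A| = 2t_xt_y - 1$.

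For the covering claim, given any target point $(x,y) \in [0,t_x^2-1]\times[0,t_y^2-1]$, I would perform a base-$t_x$ / base-$t_y$ decomposition of the coordinates, but with the roles of the summands reversed compared to the dense-sparse proof. Write $x = b_x + c_x$ where $b_x \in [0,t_x-1]$ is the residue of $x$ modulo $t_x$ and $c_x \in [0,(t_x),t_x^2-t_x]$ is the corresponding multiple of $t_x$ (the bound $c_x \le t_x^2 - t_x$ follows from $x \le t_x^2-1$). Similarly write $y = b_y + c_y$ with $b_y \in [0,(t_y),t_y^2-t_y]$ and $c_y \in [0,t_y-1]$. Then $(b_x,b_y)$ pairs a small $x$ with a multiple-of-$t_y$ value of $y$, so $(b_x,b_y) \in B$; and $(c_x,c_y)$ pairs a multiple-of-$t_x$ value of $x$ with a small $y$, so $(c_x,c_y) \in C$. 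Their sum is exactly $(x,y)$, so $(x,y) \in B+C \subseteq A+A$.

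There is no genuine obstacle: the whole proof is a formal mirror of the dense-sparse argument, and the only bookkeeping subtlety is making sure that after decomposing the coordinates, the ``short'' piece of each coordinate ends up in the summand whose corresponding direction is dense. Once that matching is correctly written down, both claims follow without further computation.
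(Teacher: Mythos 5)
Your proposal is correct and follows essentially the same argument as the paper's proof: the same cardinality count via $|B|=|C|=t_xt_y$ and $B\cap C=\{(0,0)\}$, and the same division-with-remainder decomposition $x=b_x+c_x$, $y=b_y+c_y$ with the roles of dense and sparse parts swapped between the coordinates so that $(b_x,b_y)\in B$ and $(c_x,c_y)\in C$. No issues.
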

\begin{proof}
  Since $|B|=|C|=t_xt_y$ and $B \cap C = \{(0,0)\}$, the claim on $|A|$
  follows.  For any point $(x,y) \in R$, let $x = b_x + c_x$ with
  $b_x \in [0,t_x-1]$ and $c_x \in [0,(t_x),t_x^2-t_x]$.  Similarly
  let $y = b_y + c_y$ with $b_y \in [0,(t_y),t_y^2-t_y]$ and
  $c_y \in [0,t_y-1]$.  Now $(x,y) = (b_x,b_y) + (c_x,c_y)$ with
  $(b_x,b_y)\in B$ and $(c_x,c_y) \in C$.  Thus
  $(x,y) \in B+C \subseteq A+A$.
\end{proof}

\begin{corollary}
  Let $\rho = p^2/q^2$ be a fixed aspect ratio, where $p$ and $q$ are
  integers, and let $h \ge 1$ be an integer.  Then both the
  dense-sparse basis and the short-bars basis, with parameters
  $t_x = qh$ and $t_y = ph$, are bases for the rectangle
  $[0,t_x^2-1] \times [0, t_y^2-1]$, which has the said aspect ratio.
  The efficiency of either basis is
  \[
  c = \frac{t_x^2 t_y^2}{(2 t_x t_y - 1)^2} = 0.25 + O(1/h^2).
  \]
\end{corollary}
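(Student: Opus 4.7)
The plan is to verify the three claims of the corollary separately, in the order they appear, with essentially no new work beyond substituting $t_x=qh$, $t_y=ph$ into Theorems~\ref{thm:densesparse} and~\ref{thm:shortbars}.

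First, I would check that the target rectangle has the advertised aspect ratio. With $s_x = t_x^2 - 1$ and $s_y = t_y^2 - 1$, we have $(s_y+1)/(s_x+1) = t_y^2/t_x^2 = (ph)^2/(qh)^2 = p^2/q^2 = \rho$. Second, the fact that both the dense-sparse basis and the short-bars basis with parameters $t_x, t_y \ge 1$ are bases for $[0,t_x^2-1]\times[0,t_y^2-1]$ is exactly the conclusion of Theorems~\ref{thm:densesparse} and~\ref{thm:shortbars}, so nothing is required here beyond invoking them. Both theorems also give the cardinality $k = 2t_x t_y - 1$.

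Third, for the efficiency I would compute directly
\[
c = \frac{N}{k^2} = \frac{(s_x+1)(s_y+1)}{k^2} = \frac{t_x^2 t_y^2}{(2 t_x t_y - 1)^2},
\]
which is the claimed closed form. To read off the asymptotics as $h\to\infty$, I would write $u = t_x t_y = pqh^2$ and expand
\[
c = \frac{u^2}{(2u-1)^2} = \frac{1}{4(1 - 1/(2u))^2} = \frac{1}{4}\bigl(1 + O(1/u)\bigr) = 0.25 + O(1/h^2),
\]
since $1/u = 1/(pqh^2) = O(1/h^2)$ for fixed $p,q$.

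There is really no significant obstacle here; the corollary is a direct specialization of the two preceding theorems together with an elementary Taylor expansion. The only point that requires a moment of care is confirming that $\rho = p^2/q^2$ is achieved exactly (not merely asymptotically) with the chosen parameters, and that both constructions share the same cardinality $2t_x t_y - 1$ so that a single efficiency formula covers both cases.
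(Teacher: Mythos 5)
Your proposal is correct and matches the paper's (implicit) argument exactly: the corollary is stated without proof precisely because it is the direct specialization of Theorems~\ref{thm:densesparse} and~\ref{thm:shortbars} plus the elementary expansion $u^2/(2u-1)^2 = 1/4 + O(1/u)$ with $u = pqh^2$. Nothing is missing.
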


For arbitrarily wide rectangles of any \emph{constant height} we
present a basis construction whose asymptotic efficiency
\emph{exceeds} $1/4$.  The construction is somewhat analogous to
Mrose's one-dimensional basis~\cite{mrose1979}, hence the name.

\begin{definition}
  \label{def:stacked}
  The \emph{stacked Mrose basis} with parameters $s_y \ge 0$ and $t \ge 1$
  is the set $I_1 \cup I_2 \cup I_3 \cup T \cup S$, where
  \begin{align*}
    I_1 &= [0,t] \times Y, \\
    T   &= [0, (t), at^2-t] \times \{0\}, \\
    S   &= [at^2, (t+1), (a+1)t^2-1] \times Y, \\
    I_2 &= [2at^2, 2at^2+t] \times Y, \\
    I_3 &= [(3a+1)t^2, (3a+1)t^2+t] \times Y,
  \end{align*}
  and $Y=[0,s_y]$ and $a=4s_y+3$.
\end{definition}

Note that in $I_1,I_2,I_3$ the set of $x$~coordinates is an interval;
in $T$ it is a $t$-step arithmetic progression; and in $S$ it is a
``sparse'' $(t+1)$-step arithmetic progression.

\begin{theorem}
  \label{thm:stacked}
  If $A$ is a stacked Mrose basis, then $|A|=(8s_y+7)t + (3s_y+1)$ and
  $A+A \supseteq (16s_y+14)t^2-1] \times [0,s_y]$.
\end{theorem}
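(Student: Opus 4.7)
\emph{Cardinality.} Direct enumeration gives $|I_1|=|I_2|=|I_3|=(t+1)(s_y+1)$, $|T|=at$, and $|S|=t(s_y+1)$, where $a=4s_y+3$. The only pairwise overlap among the five components is $I_1\cap T=\{(0,0),(t,0)\}$: the smallest $x$-coordinates of $S$, $I_2$, $I_3$ are $at^2$, $2at^2$, $(3a+1)t^2$ respectively, all exceeding the largest $x$-coordinate $at^2-t$ appearing in $T$ (using $a\ge 3$). Summing the five sizes and subtracting $2$ for the overlap yields $|A|=(8s_y+7)t+(3s_y+1)$.

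\emph{Reduction to a one-dimensional covering.} The structural observation driving the rest is that $I_1$, $S$, $I_2$, $I_3$ all have the form (set of $x$-coordinates)$\times[0,s_y]$, so each of their $x$-coordinates indexes a full ``column'' $\{x_1\}\times[0,s_y]\subseteq A$; meanwhile every component (including $T$) contributes points on the bottom row $y=0$. Consequently, whenever an integer $x$ admits a decomposition $x=x_1+x_2$ with $x_1$ a full-column $x$-coordinate and $x_2$ a bottom-row $x$-coordinate, the identity $(x,y)=(x_1,y)+(x_2,0)$ exhibits every $(x,y)$ with $y\in[0,s_y]$ as a sum in $A+A$. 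Writing the target width as $(4a+2)t^2-1=(16s_y+14)t^2-1$, the proof reduces to showing that every $x\in[0,(4a+2)t^2-1]$ admits such a decomposition.

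\emph{Case analysis.} I would partition the target $x$-range into seven subintervals that either overlap or abut, each handled by one of two templates. The intervals $[0,at^2]$, $[2at^2,3at^2]$, $[(3a+1)t^2,(4a+1)t^2]$ are covered by $I_j+T$ (for $j=1,2,3$) via the decomposition $x-(\text{base})=kt+r$ with $k\in[0,at-1]$, $r\in[0,t]$. The intervals $[at^2,(a+1)t^2+t-1]$, $[3at^2,(3a+1)t^2+t-1]$, $[(4a+1)t^2,(4a+2)t^2+t-1]$ are covered by $I_j+S$ via $x-(\text{base})=i(t+1)+r$ with $i\in[0,t-1]$, $r\in[0,t]$; here $\{i(t+1)+r:i\in[0,t-1],\,r\in[0,t]\}$ is exactly the interval $[0,t^2+t-1]$. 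The remaining piece $[(a+1)t^2+t,2at^2-1]$ is covered by $S+T$, and is the only genuinely novel ingredient.

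\emph{Main obstacle.} The difficult subinterval is the middle one, which lies beyond the reach of $I_1$ (the $I_1+S$ pairing saturates at $(a+1)t^2+t-1$) yet before $I_2$ (which begins at $2at^2$). Here one must verify that $S+T$ covers every residue class modulo $t$ throughout $[(a+1)t^2+t,2at^2-1]$. Setting $y=x-at^2\in[t^2+t,at^2-1]$ and writing $y=qt+r$ with $r\in[0,t-1]$, the decomposition $y=i(t+1)+kt$ is solved by $i=r$, $k=q-r$. Since $q\in[t+1,at-1]$ and $r\in[0,t-1]$, one obtains $k\in[2,at-1]\subseteq[0,at-1]$, so the choice is always admissible. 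This is the single place where the step $t+1$ in $S$ (rather than $t$) is used, and it is precisely what allows the stacked Mrose construction to push past efficiency $1/4$.
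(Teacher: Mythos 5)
Your proposal is correct and follows essentially the same route as the paper: the identical cardinality count (with the overlap $I_1\cap T=\{(0,0),(t,0)\}$) and the same decomposition of $[0,(4a+2)t^2-1]$ into seven subintervals covered by $I_1+T$, $I_1+S$, $T+S$, $I_2+T$, $I_2+S$, $I_3+T$, $I_3+S$. You merely supply the explicit modular arithmetic (in particular for the $T+S$ piece) that the paper's proof leaves as an unverified list, which is a welcome addition but not a different argument.
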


\begin{proof}
  Let us first determine the size of the basis.  We observe that
  $\lvert I_1 \rvert = \lvert I_2 \rvert = \lvert I_3 \rvert =
  (t+1)(s_y+1)$, $\lvert T \rvert = at$, and $\lvert S \rvert =
  t(s_y+1)$.  Because the parts are otherwise disjoint except that
  $I_1 \cap T = \{(0,0),(t,0)\}$, the claim on $|A|$ follows.

  Let us next verify that $A+A$ covers the desired target rectangle.
  We check seven consecutive subrectangles in turn.
  \begin{enumerate}
  \item $[0, at^2-1]              \times Y$ is covered by $I_1 + T$.
  \item $[at^2, (a+1)t^2-1]       \times Y$ is covered by $I_1 + S$.
  \item $[(a+1)t^2, 2at^2-1]      \times Y$ is covered by $T   + S$.
  \item $[2at^2, 3at^2-1]         \times Y$ is covered by $I_2 + T$.
  \item $[3at^2, (3a+1)t^2-1]     \times Y$ is covered by $I_2 + S$.
  \item $[(3a+1)t^2, (4a+1)t^2-1] \times Y$ is covered by $I_3 + T$.
  \item $[(4a+1)t^2, (4a+2)t^2-1] \times Y$ is covered by $I_3 + S$.
  \end{enumerate}
  Because $I_1,I_2,I_3,T,S \subseteq A$, combining observations (1)--(7)
  and $4a+2 = 16s_y+14$
  we have
  \[
  A+A \supseteq [0, (16s_y+14)t^2-1] \times Y
  \]
  as claimed.
\end{proof}

\begin{corollary}
The stacked Mrose basis has efficiency
\[
c = \frac{N}{k^2} = \frac{(16s_y+14)t^2 \cdot (s_y+1)}{\left( (8s_y+7)t \right)^2 + O(t)}
\xrightarrow[t \to \infty]{} \frac{2s_y+2}{8s_y+7}.
\]
\end{corollary}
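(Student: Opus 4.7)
The plan is to read off the values of $N$ and $k$ directly from Theorem~\ref{thm:stacked}, then perform the asymptotic simplification as $t \to \infty$ with $s_y$ held fixed. By the theorem, the stacked Mrose basis $A$ covers the rectangle $[0, (16s_y+14)t^2-1] \times [0, s_y]$, which contains
\[
N = (16s_y+14) t^2 \cdot (s_y+1)
\]
grid points. Likewise, $k = |A| = (8s_y+7)t + (3s_y+1)$. Since $s_y$ is fixed, expanding the square gives
\[
k^2 = \bigl((8s_y+7)t\bigr)^2 + 2(8s_y+7)(3s_y+1)t + (3s_y+1)^2 = \bigl((8s_y+7)t\bigr)^2 + O(t),
\]
so $c = N/k^2$ has exactly the form claimed in the statement.

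Next I would carry out the limit. The key algebraic observation is the clean identity $16s_y + 14 = 2(8s_y+7)$, which causes one factor of $8s_y+7$ in the numerator to cancel against the square in the denominator. Explicitly,
\[
\lim_{t\to\infty} \frac{(16s_y+14)t^2(s_y+1)}{\bigl((8s_y+7)t\bigr)^2 + O(t)}
= \frac{(16s_y+14)(s_y+1)}{(8s_y+7)^2}
= \frac{2(8s_y+7)(s_y+1)}{(8s_y+7)^2}
= \frac{2s_y+2}{8s_y+7},
\]
which proves the corollary.

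There is no real obstacle here; the work is entirely a mechanical substitution once Theorem~\ref{thm:stacked} is available. The only point worth flagging is the implicit convention that $s_y$ is a fixed constant while $t$ is the limit variable, so that lower-order terms in $k$ are absorbed into $O(t)$ with constants depending on $s_y$. Stating this dependence once at the outset keeps the short derivation unambiguous.
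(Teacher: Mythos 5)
Your proposal is correct and matches the paper's (implicit) argument: the corollary is a direct substitution of $N=(16s_y+14)t^2(s_y+1)$ and $k=(8s_y+7)t+(3s_y+1)$ from Theorem~\ref{thm:stacked}, followed by the cancellation $16s_y+14=2(8s_y+7)$ in the limit. Your remark that the $O(t)$ constant depends on the fixed $s_y$ is a reasonable clarification but changes nothing of substance.
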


\begin{example}
  With $s_y=1$, Definition~\ref{def:stacked} gives a basis of size
  $k=15t+4$ for the rectangle $[0, 30t^2-1]\times[0,1]$, with
  efficiency tending to $4/15 > 0.2666$ as $t \to \infty$.
\end{example}

\begin{example}
  With $s_y=2$, Definition~\ref{def:stacked} gives a basis of size
  $k=23t+7$ for the rectangle $[0, 46t^2-1]\times[0,2]$, with
  efficiency tending to $6/23 > 0.2609$ as $t \to \infty$.
  Figure~\ref{fig:thm_1} illustrates this basis in the case of $t=10$.
\end{example}

Although a stacked Mrose basis can be constructed arbitrarily high,
its efficiency tends down to $1/4$ as $s_y$ goes to infinity.  We do
not know whether $1/4$ can be asymptotically exceeded for rectangles
with both dimensions going to infinity (e.g.\ with a constant aspect
ratio).

\begin{figure}[tb]
        \centerline{\includegraphics[width=1\textwidth]{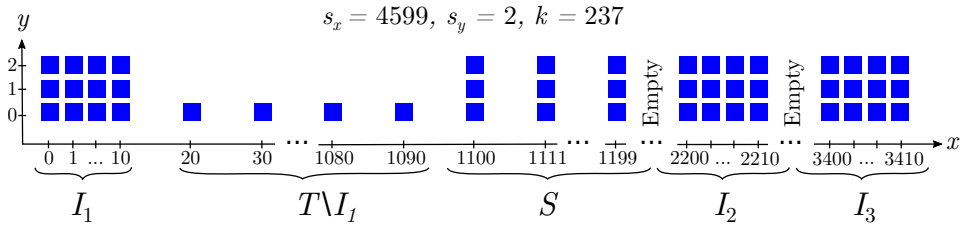}}
	\caption{A schematic illustration of the stacked Mrose basis
          (Definition~\ref{def:stacked}) with parameters $s_y=2$ and
          $t=10$.  In this case $a=11$ and $s_x=4599$.}
        \label{fig:thm_1}
\end{figure}

%%%%%%%%%%%%%%%%%%%%%%%%%%%%%%%%%%%%%%%%%%%%%%%%%%%%%%%%%%%%%%%%%%%%%%
\section{Final remarks}
In this paper, we have studied two dimensional additive bases of minimal cardinality. 
By computation we have listed all minimal bases for
rectangles up to $s_x,s_y\le 11 $ and all minimal restricted bases for
rectangles up to $ s_x,s_y\le 26 $.  Furthermore, we have determined
that the boundary basis is minimal in the restricted case for all
even-sided squares with $ 2 \le s \le 46 $.  We have also found many
non-square solutions for larger $ s_x $.  The L-shaped and boundary
bases are in general not minimal for rectangles; we have presented
three parametric bases that are in general smaller than the trivial
L-shaped and boundary bases.

We note that additive bases are conceptually closely related to
\emph{difference bases}, where the object of interest is the
difference set $A-A$.  One-dimensional difference bases have been
studied e.g.\ by Leech~\cite{leech1956} and
Wichmann~\cite{wichmann1963anote}. Difference bases find applications
in sensor arrays, particularly when second-order statistics of the
element outputs are processed \cite{hoctor1990}. Due to the use of
data covariance in many applications, such as direction-of-arrival
estimation, both one- and two-dimensional difference bases have
received attention recently
\cite{linebarger1993difference,pal2010nested,liu2017}. We also point
out that non-rectangular, for example hexagonal grids have received
some attention in array processing using difference
bases~\cite{haubrich1968array}, and are therefore an interesting
direction of future research for planar additive bases.

%%%%%%%%%%%%%%%%%%%%%%%%%%%%%%%%%%%%%%%%%%%%%%%%%%%%%%%%%%%%%%%%%%%%%%
% Tables at the end

\begin{table}[p]
	\caption{Minimal bases for squares.}
	\label{tab:results_nonrestr_square}
	\centering
		\begin{tabular}{|r|r|r|r|}
			\hline
			$ s $ & $ k $& $ m $ & $ m_\text{u} $\\
			\hline
			0 & 1 & 1 & 1\\
			1 & 3 & 1 & 1\\
			2 & 4 & 1 & 1\\
			3 & 7 & 15 & 10\\
			4 & 8 & 8 & 5\\
			5 & 11 & 137 & 76\\
			6 & 12 & 24 & 14\\
			7 & 14 & 14 & 9\\
			8 & 16 & 103 & 54\\
			9 & 19	&	3531 &	1792\\
			10	 & 20 & 360 & 182\\
			11	& 23 & 26857 & 13465\\
			\hline
		\end{tabular}
\end{table}

\begin{table}[p]
	\caption{Minimal restricted bases for squares.}
	\label{tab:results_restr_square}
	\centering
		\begin{tabular}{|r|r|r|r|}
			\hline
			$ s $ & $ k^* $& $ m $ & $ m_\text{u} $\\
			\hline
			0 & 1 & 1 & 1\\
			2 & 4 & 1 & 1\\
			4 & 8 & 1 & 1\\
			6 &12 &1 & 1\\
			8 & 16 & 9 & 5\\
			10	 & 20 &	17 & 4\\
			12 & 24 & 58 & 16\\
			14	 & 28 & 163 & 28\\
			16 & 32 & 451 & 72\\
			18 & 36	&	2047 &	276\\
			20	 & 40 & 8451 & 1133\\
			22	& 44 & 43807 & 5575\\
			24 & 48	& 213859 & 27108\\
			26	& 52 & 1273607	& 159744\\
			28 & 56	&  & \\	
			30 & 60	&  & \\	
			32 & 64	&  & \\	
			34 & 68	&  & \\	
			36 & 72	&  & \\	
			38 & 76	&  & \\	
			40 & 80	&  & \\	
			42 & 84 &  & \\	
			44 & 88	&  & \\	
			46 & 92	&  & \\
			\hline
		\end{tabular}
\end{table}

\begin{table}[p]
        \caption{Minimal bases for rectangles.}
	\label{tab:results_nonrestr_rect}
	\centering
	\begin{tabular}{|r|r|r|r|r|}
\hline
\textbf{$s_x$}&\textbf{$s_y$}&\textbf{$k$}&\textbf{$\Delta k$}&\textbf{$m_\text{u}$}\\\hline
$0$&$0$&$1$&$0$&$1$\\
\hline
$1$&$0$&$2$&$0$&$1$\\
$$&$1$&$3$&$0$&$1$\\
\hline
$2$&$0$&$2$&$0$&$1$\\
$$&$1$&$4$&$0$&$3$\\
$$&$2$&$4$&$0$&$1$\\
\hline
$3$&$0$&$3$&$0$&$2$\\
$$&$1$&$5$&$0$&$6$\\
$$&$2$&$6$&$0$&$16$\\
$$&$3$&$7$&$0$&$10$\\
\hline
$4$&$0$&$3$&$0$&$2$\\
$$&$1$&$5$&$-1$&$3$\\
$$&$2$&$6$&$0$&$6$\\
$$&$3$&$8$&$0$&$75$\\
$$&$4$&$8$&$0$&$5$\\
\hline
$5$&$0$&$4$&$0$&$5$\\
$$&$1$&$6$&$-1$&$10$\\
$$&$2$&$7$&$-1$&$1$\\
$$&$3$&$9$&$0$&$86$\\
$$&$4$&$10$&$0$&$283$\\
$$&$5$&$11$&$0$&$76$\\
\hline
$6$&$0$&$4$&$0$&$5$\\
$$&$1$&$6$&$-2$&$4$\\
$$&$2$&$8$&$0$&$101$\\
$$&$3$&$9$&$-1$&$1$\\
$$&$4$&$10$&$0$&$16$\\
\hline
\end{tabular}
\begin{tabular}{|r|r|r|r|r|}
\hline
\textbf{$s_x$}&\textbf{$s_y$}&\textbf{$k$}&\textbf{$\Delta k$}&\textbf{$m_\text{u}$}\\\hline
$6$&$5$&$12$&$0$&$660$\\
$$&$6$&$12$&$0$&$14$\\
\hline
$7$&$0$&$4$&$-1$&$2$\\
$$&$1$&$7$&$-2$&$28$\\
$$&$2$&$8$&$-2$&$5$\\
$$&$3$&$10$&$-1$&$25$\\
$$&$4$&$11$&$-1$&$50$\\
$$&$5$&$13$&$0$&$924$\\
$$&$6$&$14$&$0$&$3576$\\
$$&$7$&$14$&$-1$&$9$\\
\hline
$8$&$0$&$4$&$-1$&$1$\\
$$&$1$&$7$&$-3$&$6$\\
$$&$2$&$8$&$-2$&$1$\\
$$&$3$&$11$&$-1$&$325$\\
$$&$4$&$11$&$-1$&$4$\\
$$&$5$&$13$&$-1$&$3$\\
$$&$6$&$14$&$0$&$73$\\
$$&$7$&$15$&$-1$&$16$\\
$$&$8$&$16$&$0$&$54$\\
\hline
$9$&$0$&$5$&$-1$&$11$\\
$$&$1$&$8$&$-3$&$70$\\
$$&$2$&$10$&$-2$&$647$\\
$$&$3$&$12$&$-1$&$1940$\\
$$&$4$&$13$&$-1$&$920$\\
$$&$5$&$15$&$0$&$11479$\\
$$&$6$&$15$&$-1$&$2$\\
\hline
\end{tabular}
\begin{tabular}{|r|r|r|r|r|}
\hline
\textbf{$s_x$}&\textbf{$s_y$}&\textbf{$k$}&\textbf{$\Delta k$}&\textbf{$m_\text{u}$}\\\hline
$9$&$7$&$17$&$0$&$5433$\\
$$&$8$&$18$&$0$&$9171$\\
$$&$9$&$19$&$0$&$1792$\\
\hline
$10$&$0$&$5$&$-1$&$8$\\
$$&$1$&$8$&$-4$&$19$\\
$$&$2$&$10$&$-2$&$174$\\
$$&$3$&$12$&$-2$&$203$\\
$$&$4$&$13$&$-1$&$64$\\
$$&$5$&$15$&$-1$&$267$\\
$$&$6$&$16$&$0$&$357$\\
$$&$7$&$17$&$-1$&$81$\\
$$&$8$&$18$&$0$&$212$\\
$$&$9$&$20$&$0$&$17076$\\
$$&$10$&$20$&$0$&$182$\\
\hline
$11$&$0$&$5$&$-2$&$1$\\
$$&$1$&$9$&$-4$&$258$\\
$$&$2$&$10$&$-4$&$3$\\
$$&$3$&$13$&$-2$&$1368$\\
$$&$4$&$14$&$-2$&$109$\\
$$&$5$&$16$&$-1$&$534$\\
$$&$6$&$17$&$-1$&$96$\\
$$&$7$&$18$&$-1$&$92$\\
$$&$8$&$19$&$-1$&$12$\\
$$&$9$&$21$&$0$&$13860$\\
$$&$10$&$22$&$0$&$42862$\\
$$&$11$&$23$&$0$&$13465$\\
\hline
\end{tabular}

\end{table}

\begin{table}[p]
	\caption{Minimal restricted bases for rectangles.}
	\label{tab:results_restr_rect}
	\centering
	\begin{tabular}{|r|r|r|r|r|}
\hline
\textbf{$s_x$}&\textbf{$s_y$}&\textbf{$k^{*}$}&\textbf{$\Delta k$}&\textbf{$m_\text{u}$}\\\hline
$0$&$0$&$1$&$0$&$1$\\
\hline
$2$&$0$&$2$&$0$&$1$\\
$$&$2$&$4$&$0$&$1$\\
\hline
$4$&$0$&$3$&$0$&$1$\\
$$&$2$&$6$&$0$&$1$\\
$$&$4$&$8$&$0$&$1$\\
\hline
$6$&$0$&$4$&$0$&$1$\\
$$&$2$&$8$&$0$&$1$\\
$$&$4$&$10$&$0$&$1$\\
$$&$6$&$12$&$0$&$1$\\
\hline
$8$&$0$&$4$&$-1$&$1$\\
$$&$2$&$8$&$-2$&$1$\\
$$&$4$&$11$&$-1$&$1$\\
$$&$6$&$14$&$0$&$3$\\
$$&$8$&$16$&$0$&$5$\\
\hline
$10$&$0$&$5$&$-1$&$1$\\
$$&$2$&$10$&$-2$&$2$\\
$$&$4$&$13$&$-1$&$1$\\
$$&$6$&$16$&$0$&$4$\\
$$&$8$&$18$&$0$&$6$\\
$$&$10$&$20$&$0$&$4$\\
\hline
$12$&$0$&$5$&$-2$&$1$\\
$$&$2$&$10$&$-4$&$1$\\
$$&$4$&$14$&$-2$&$2$\\
$$&$6$&$18$&$0$&$14$\\
$$&$8$&$19$&$-1$&$1$\\
$$&$10$&$22$&$0$&$14$\\
$$&$12$&$24$&$0$&$16$\\
\hline
$14$&$0$&$6$&$-2$&$3$\\
$$&$2$&$12$&$-4$&$7$\\
$$&$4$&$16$&$-2$&$15$\\
$$&$6$&$20$&$0$&$91$\\
$$&$8$&$22$&$0$&$47$\\
$$&$10$&$24$&$0$&$30$\\
$$&$12$&$26$&$0$&$37$\\
\hline
\end{tabular}
\begin{tabular}{|r|r|r|r|r|}
\hline
\textbf{$s_x$}&\textbf{$s_y$}&\textbf{$k^{*}$}&\textbf{$\Delta k$}&\textbf{$m_\text{u}$}\\\hline
$14$&$14$&$28$&$0$&$28$\\
\hline
$16$&$0$&$6$&$-3$&$1$\\
$$&$2$&$12$&$-6$&$1$\\
$$&$4$&$16$&$-4$&$1$\\
$$&$6$&$20$&$-2$&$1$\\
$$&$8$&$22$&$-2$&$1$\\
$$&$10$&$26$&$0$&$74$\\
$$&$12$&$28$&$0$&$86$\\
$$&$14$&$30$&$0$&$156$\\
$$&$16$&$32$&$0$&$72$\\
\hline
$18$&$0$&$7$&$-3$&$4$\\
$$&$2$&$14$&$-6$&$20$\\
$$&$4$&$18$&$-4$&$12$\\
$$&$6$&$22$&$-2$&$17$\\
$$&$8$&$25$&$-1$&$34$\\
$$&$10$&$28$&$0$&$279$\\
$$&$12$&$30$&$0$&$286$\\
$$&$14$&$32$&$0$&$302$\\
$$&$16$&$34$&$0$&$345$\\
$$&$18$&$36$&$0$&$276$\\
\hline
$20$&$0$&$7$&$-4$&$2$\\
$$&$2$&$14$&$-8$&$3$\\
$$&$4$&$18$&$-6$&$1$\\
$$&$6$&$22$&$-4$&$1$\\
$$&$8$&$25$&$-3$&$1$\\
$$&$10$&$29$&$-1$&$1$\\
$$&$12$&$32$&$0$&$1155$\\
$$&$14$&$34$&$0$&$1157$\\
$$&$16$&$36$&$0$&$1202$\\
$$&$18$&$38$&$0$&$1406$\\
$$&$20$&$40$&$0$&$1133$\\
\hline
$22$&$0$&$8$&$-4$&$12$\\
$$&$2$&$16$&$-8$&$113$\\
$$&$4$&$20$&$-6$&$14$\\
$$&$6$&$24$&$-4$&$17$\\
\hline
\end{tabular}
\begin{tabular}{|r|r|r|r|r|}
\hline
\textbf{$s_x$}&\textbf{$s_y$}&\textbf{$k^{*}$}&\textbf{$\Delta k$}&\textbf{$m_\text{u}$}\\\hline
$22$&$8$&$28$&$-2$&$381$\\
$$&$10$&$32$&$0$&$8957$\\
$$&$12$&$34$&$0$&$5585$\\
$$&$14$&$36$&$0$&$5601$\\
$$&$16$&$38$&$0$&$5644$\\
$$&$18$&$40$&$0$&$5850$\\
$$&$20$&$42$&$0$&$6705$\\
$$&$22$&$44$&$0$&$5575$\\
\hline
$24$&$0$&$8$&$-5$&$4$\\
$$&$2$&$16$&$-10$&$10$\\
$$&$4$&$20$&$-8$&$1$\\
$$&$6$&$24$&$-6$&$1$\\
$$&$8$&$28$&$-4$&$16$\\
$$&$10$&$32$&$-2$&$50$\\
$$&$12$&$35$&$-1$&$4$\\
$$&$14$&$38$&$0$&$27132$\\
$$&$16$&$40$&$0$&$27177$\\
$$&$18$&$42$&$0$&$27381$\\
$$&$20$&$44$&$0$&$28238$\\
$$&$22$&$46$&$0$&$32680$\\
$$&$24$&$48$&$0$&$27108$\\
\hline
$26$&$0$&$8$&$-6$&$2$\\
$$&$2$&$16$&$-12$&$2$\\
$$&$4$&$22$&$-8$&$46$\\
$$&$6$&$26$&$-6$&$18$\\
$$&$8$&$30$&$-4$&$302$\\
$$&$10$&$34$&$-2$&$1384$\\
$$&$12$&$36$&$-2$&$4$\\
$$&$14$&$40$&$0$&$159771$\\
$$&$16$&$42$&$0$&$159828$\\
$$&$18$&$44$&$0$&$160019$\\
$$&$20$&$46$&$0$&$160874$\\
$$&$22$&$48$&$0$&$165318$\\
$$&$24$&$50$&$0$&$186849$\\
$$&$26$&$52$&$0$&$159744$\\
\hline
\end{tabular}

\end{table}

\begin{table}[p]
	\caption{Minimal restricted bases for $ s_y=2 $.}
	\label{tab:results_restr_sy2}
	\centering
	\begin{tabular}{|r|r|r|}
\hline
\textbf{$s_x$}&\textbf{$k^{*}$}&\textbf{$m_\text{u}$}\\\hline
$2$&$4$&$1$\\
$4$&$6$&$1$\\
$6$&$8$&$1$\\
$8$&$8$&$1$\\
$10$&$10$&$2$\\
$12$&$10$&$1$\\
$14$&$12$&$7$\\
$16$&$12$&$1$\\
$18$&$14$&$20$\\
$20$&$14$&$3$\\
$22$&$16$&$113$\\
$24$&$16$&$10$\\
$26$&$16$&$2$\\
$28$&$18$&$162$\\
$30$&$18$&$22$\\
\hline
\end{tabular}
\begin{tabular}{|r|r|r|}
\hline
\textbf{$s_x$}&\textbf{$k^{*}$}&\textbf{$m_\text{u}$}\\\hline
$32$&$18$&$1$\\
$34$&$20$&$777$\\
$36$&$20$&$50$\\
$38$&$20$&$8$\\
$40$&$20$&$1$\\
$42$&$22$&$412$\\
$44$&$22$&$20$\\
$46$&$24$&$32931$\\
$48$&$24$&$3126$\\
$50$&$24$&$369$\\
$52$&$24$&$37$\\
$54$&$24$&$2$\\
$56$&$26$&$4337$\\
$58$&$26$&$239$\\
$60$&$26$&$36$\\
\hline
\end{tabular}
\begin{tabular}{|r|r|r|}
\hline
\textbf{$s_x$}&\textbf{$k^{*}$}&\textbf{$m_\text{u}$}\\\hline
$62$&$28$&$125247$\\
$64$&$26$&$1$\\
$66$&$28$&$654$\\
$68$&$28$&$62$\\
$70$&$28$&$3$\\
$72$&$28$&$1$\\
$74$&$30$&$2415$\\
$76$&$30$&$97$\\
$78$&$30$&$6$\\
$80$&$30$&$1$\\
$82$&$32$&$18937$\\
$84$&$32$&$1561$\\
$86$&$32$&$193$\\
$88$&$32$&$8$\\
$90$&$32$&$2$\\
\hline
\end{tabular}
\begin{tabular}{|r|r|r|}
\hline
\textbf{$s_x$}&\textbf{$k^{*}$}&\textbf{$m_\text{u}$}\\\hline
$92$&$32$&$1$\\
$94$&$34$&$1284$\\
$96$&$34$&$222$\\
$98$&$34$&$88$\\
$100$&$34$&$1$\\
$102$&$36$&$74170$\\
$104$&$34$&$1$\\
$106$&$36$&$945$\\
$108$&$36$&$242$\\
$110$&$36$&$104$\\
$112$&$38$&$283716$\\
$114$&$38$&$42971$\\
$116$&$36$&$1$\\
$118$&$38$&$454$\\
$120$&$38$&$202$\\
\hline
\end{tabular}

\end{table}

%%%%%%%%%%%%%%%%%%%%%%%%%%%%%%%%%%%%%%%%%%%%%%%%%%%%%%%%%%%%%%%%%%%%%%
\bibliographystyle{plain}
\bibliography{refs}

\end{document}